\newtheorem{theorem}{Theorem}[section]
\newtheorem{lemma}[theorem]{Lemma}
\newtheorem{corollary}[theorem]{Corollary} 
\theoremstyle{definition}
\newtheorem{definition}[theorem]{Definition}
\theoremstyle{remark}
\numberwithin{equation}{section}
\newcommand{\lip}{{\rm{Lip}}}
\newcommand{\dist}{{\rm{dist}}}
\newcommand{\N}{\mathbb{N}}
\newcommand{\R}{\mathbb{R}}
\newcommand{\inter}{{\rm{Int}}}
\newcommand{\diam}{{\rm{diam}}}
\newcommand{\K}{\ker}
\begin{document}

\title{On the structure of universal~differentiability~sets.\thanks{The research presented in this paper was completed when the author was a PhD student, supervised by Dr. Olga Maleva at University of Birmingham, UK. The paper is based on part of the author's PhD thesis. The author was supported by EPSRC funding.}}

\author{Michael Dymond}

\maketitle

\begin{abstract}
We prove that universal differentiability sets in Euclidean spaces possess distinctive structural properties. Namely, we show that any universal differentiability set contains a `kernel' in which the points of differentiability of each Lipschitz function are dense. We further prove that no universal differentiability set may be decomposed as a countable union of relatively closed, non-universal differentiability sets. The sharpness of this result, with respect to existing decomposibility results of the opposite nature, is discussed. 
\end{abstract}

\section{Introduction.}
Subsets of $\mathbb{R}^{d}$ containing a point of differentiability of every Lipschitz function $f:\mathbb{R}^{d}\to\mathbb{R}$ form a complex and still somewhat mysterious class of sets, despite significant modern progress. Such sets are called \emph{universal differentiability sets} (or UDSs), a term introduced in~\cite{doremaleva2}. The classical Rademacher's Theorem states that Lipschitz functions on Euclidean spaces are differentiable almost everywhere with respect to the Lebesgue measure. Thus, every set of positive measure is a universal differentiability set. Whilst one may characterise universal differentiability sets in $\mathbb{R}$ as sets of positive Lebesgue measure (see \cite{zahorski46}), this description fails in all Euclidean spaces of higher dimension. Preiss proves, in \cite{preiss90}, that $\mathbb{R}^{2}$ contains a dense, $G_{\delta}$ universal differentiability set of Lebesgue measure zero. In \cite{doremaleva1}, Dor\'e and Maleva verify the existence of compact universal differentiability sets of Lebesgue measure zero. Recent work in \cite{doremaleva2} and \cite{dymondmaleva13} establishes the existence of universal differentiability sets of an exceptional nature with respect to the Hausdorff and Minkowski dimensions. 

In order to better understand the nature of universal differentiability sets, it is helpful to study the class of porous and $\sigma$-porous sets. A subset $P\in\mathbb{R}^{d}$ is called porous if there exists $c\in(0,1)$ such that for every $x\in P$ and $\varepsilon>0$, there exists $h\in B(x,\varepsilon)$ such that $B(h,c\left\|h-x\right\|)\cap P=\emptyset$. A subset $F\subseteq\mathbb{R}^{d}$ is called $\sigma$-porous if $F$ may be expressed as a countable union of porous sets. $\sigma$-porous sets in $\mathbb{R}^{d}$ are negliglible in several senses. Any $\sigma$-porous set is of the first category with respect to the Baire Category Theorem. Moreover, unlike sets of the first category, $\sigma$-porous subsets of $\mathbb{R}^{n}$ must have Lebesgue measure zero, a consequence of the Lebesgue Density Theorem. For a survey on porous and $\sigma$-porous sets we refer to \cite{zajicek75}.

Porous and $\sigma$-porous sets are extremely relevant in the study of differentiability of Lipschitz functions because they admit Lipschitz functions which are nowhere differentiable inside them. Given a porous set $P\subseteq\mathbb{R}^{d}$ the distance function $x\mapsto \dist(x,P)$ is Lipschitz and fails to be differentiable at any point of $P$. Moreover, the paper \cite{preisszajicek01} proves that every $\sigma$-porous subset of $\mathbb{R}^{d}$ is a non-universal differentiability set. Universal differentiability sets are therefore necessarily non-$\sigma$-porous and further there appears to be an intimate relationship between these two classes of sets. We note, however, that non-$\sigma$-porosity is not sufficient for the universal differentiability property: Zajicek constructs in \cite{zajicek75} a non-$\sigma$-porous subset of the interval $[0,1]$ with Lebesgue measure zero. This is a non-universal differentiability set in $\mathbb{R}$ and its pre-image under the first co-ordinate projection map $p_{1}:\mathbb{R}^{d}\to\mathbb{R}$ gives a non-$\sigma$-porous, non-universal differentiability set in $\mathbb{R}^{d}$ with $d>1$.  

A fundamental property of non-$\sigma$-porous sets is that they cannot by decomposed: whenever a non-$\sigma$-porous set $G$ is expressed as a countable union of sets $G_{i}$, at least one of the sets $G_{i}$ has to be non-$\sigma$-porous. The research presented in this paper began with the question of whether universal differentiability sets exhibit the same robustness. That is, does every countable decomposition of a universal differentiability set necessarily contain a universal differentiability set? After a short time, we found an example to show that the answer is negative, using a result of Alberti, Cs\"ornyei and Preiss in \cite{acp05} (see Section~2). Nevertheless, it became clear that the universal differentiability property imposes strict demands on the nature of sets possessing it.

In this paper we examine the structural properties of universal differentiability sets in Euclidean spaces. Our approach is motivated by the work \cite{zelenypelant04}, of Zelen\'y and Pelant on the structure of non-$\sigma$-porous sets, which provides a level of insight into the nature of non-$\sigma$-porous sets not yet available for universal differentiability sets. In particular, we prove that, like non-$\sigma$-porous sets, universal differentiability sets contain a `kernel', which in some sense captures the core or essence of the set. In the papers \cite{doremaleva1}, \cite{doremaleva2} and \cite{doremaleva3}, Dor\'e and Maleva observe that the universal differentiability sets constructed possess the property that the differentiability points of each Lipschitz function form a dense subset. We verify that this is, broadly speaking, an intrinsic property of universal differentiability sets. We go on to establish that no universal differentiability set can be decomposed as a countable union of relatively closed, non-universal differentiability sets. Our main results are stated in Section~2 and proved in Section~3. Finally, in Section~4 we give an application to a question of Godefroy relating to the existence of exceptional universal~differentiability~sets.
\section{Main Results}
In this section we present our main results and discuss their connections to the existing theory. Our two main theorems are based on the following lemma:
\begin{lemma}\label{lemma:closed2}
	Let $F\subseteq\mathbb{R}^{d}$ be a universal differentiability set and suppose that $A$ is a relatively closed subset of $F$. Then either $A$ or $F\setminus A$ is a universal differentiability set.
\end{lemma}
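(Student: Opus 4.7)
The plan is to argue by contradiction: I would assume that neither $A$ nor $F\setminus A$ is a universal differentiability set and, from witnesses of non-universality, construct a single Lipschitz function $g:\R^{d}\to\R$ with no differentiability points in $F$. This contradicts the hypothesis that $F$ is a UDS. Fix therefore $L$-Lipschitz functions $f_{A},f_{B}:\R^{d}\to\R$ with $f_{A}$ nowhere differentiable on $A$ and $f_{B}$ nowhere differentiable on $F\setminus A$; by a standard localisation and truncation, I may further assume that $f_{A}$ and $f_{B}$ are uniformly bounded on $\R^{d}$.

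The geometric input that drives the argument is that, since $A$ is relatively closed in $F$, one has $A=F\cap\overline{A}$, where $\overline{A}$ denotes the closure of $A$ in $\R^{d}$. Consequently the $1$-Lipschitz distance $\psi(x):=\dist(x,\overline{A})$ vanishes identically on $A$ but is strictly positive on $F\setminus A$. I would then fix a Lipschitz cut-off $\chi:[0,\infty)\to[0,1]$ with $\chi(t)=t^{2}$ for $t\in[0,1]$ and $\chi(t)=1$ for $t\geq 1$, and glue $f_{A}$ and $f_{B}$ by setting
\[
g(x) := f_{A}(x)+\chi(\psi(x))\bigl(f_{B}(x)-f_{A}(x)\bigr).
\]
Then $g$ is Lipschitz on $\R^{d}$, coincides with $f_{A}$ on $\overline{A}$, and coincides with $f_{B}$ outside the unit neighbourhood of $\overline{A}$.

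Non-differentiability of $g$ on $F$ would then be verified region by region. On $A$, since $\chi(t)\leq t^{2}$ and $\psi(y)\leq |y-x|$ for $x\in A$, the blending term $\chi(\psi(y))(f_{B}(y)-f_{A}(y))$ is $O(|y-x|^{2})=o(|y-x|)$ and so is differentiable at $x$ with zero derivative; hence $g$ is differentiable at $x$ if and only if $f_{A}$ is, which fails by construction. On $(F\setminus A)\cap\{\psi\geq 1\}$, the function $g$ agrees locally with $f_{B}$ and inherits its non-differentiability.

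The main obstacle is verifying non-differentiability of $g$ on the transition region $(F\setminus A)\cap\{0<\psi<1\}$. Locally there, $g=(1-\psi^{2})f_{A}+\psi^{2}f_{B}$, and at a point $x$ at which $\psi$ is itself differentiable, a direct expansion shows that differentiability of $g$ at $x$ is equivalent to differentiability of the strict convex combination $(1-\psi(x)^{2})f_{A}+\psi(x)^{2}f_{B}$ at $x$; since both weights are positive and $f_{B}$ is non-differentiable at $x$, this forces $g$ to be non-differentiable whenever $f_{A}$ is differentiable at $x$. What remains are two residual pathologies: points at which $\psi$ is non-differentiable (which lie among the porosity points of $\overline{A}$, a $\sigma$-porous set) and points at which $f_{A}$ and $f_{B}$ have non-differentiabilities that exactly cancel in the above combination. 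Since $\sigma$-porous sets are non-UDS by \cite{preisszajicek01}, these exceptional loci are small; I would expect to dispose of them by an additional small perturbation of $g$ built from a Lipschitz function non-differentiable precisely on the residual set, or by a short iteration of the construction.
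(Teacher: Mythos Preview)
Your gluing idea is natural and the analysis on $A$ is correct: the blending term is $O(|y-x|^{2})$ there, so $g$ inherits the non-differentiability of $f_{A}$. The genuine gap is in the transition region. At a point $x\in F\setminus A$ with $0<\psi(x)<1$ and $\psi$ differentiable at $x$, you correctly reduce to the fixed combination $(1-\psi(x)^{2})f_{A}+\psi(x)^{2}f_{B}$, and this is indeed non-differentiable whenever $f_{A}$ is differentiable at $x$. But you have no control over the non-differentiability set of $f_{A}$ inside $F\setminus A$: nothing prevents $f_{A}$ from being non-differentiable on a large part of $F\setminus A$, and at such points the two non-differentiabilities can cancel in the convex combination, making $g$ differentiable there. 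Your proposed fixes do not close this. The residual set lies in the union of two non-UDSs (the non-differentiability set of $\psi$ and that of $f_{A}$), but a union of two non-UDSs can itself be a UDS---that is precisely the phenomenon the lemma addresses---so there need not exist a single Lipschitz function nowhere differentiable on the residual set, and any perturbation you add to treat it may create new differentiability points in the region already handled. (A side remark: the non-differentiability points of $\psi$ in $\R^{d}\setminus\overline{A}$ are not ``porosity points of $\overline{A}$''---those lie in $\overline{A}$. The relevant set is the locus of non-unique nearest points, which is $\sigma$-porous for a different reason, namely local semiconcavity of $\psi$.)

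The paper eliminates the cancellation problem by inserting a smoothing step before adding the non-differentiable piece. One covers $F\setminus A$ by boxes $U_{i}$ with pairwise disjoint interiors whose diameters shrink relative to $\dist(U_{i},A)$ (Lemma~\ref{lemma:respectcover}); inside each $\inter(U_{i})$ one replaces $h=f_{A}$ by a mollified $C^{\infty}$ version $\widehat{h}$ that still agrees with $h$ on $\partial U_{i}$ (Lemma~\ref{lemma2}), and then adds a bump-supported Lipschitz function nowhere differentiable on $F\cap\inter(U_{i})$ (Lemma~\ref{lemma:main}). Because the background $\widehat{h}$ is smooth in $\inter(U_{i})$, the sum is automatically nowhere differentiable there---cancellation is impossible. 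Non-differentiability on $A$ is preserved because the resulting function agrees with $h$ on every $\partial U_{i}$ and the boxes shrink fast enough near $A$; this plays the role of your $\chi(\psi)$ device, but at the level of a discrete cover rather than a continuous cut-off. The residual boundary set $\bigcup_{i}\partial U_{i}$ is disposed of separately by Lemma~\ref{lemma:removingboundaries}. If you want to salvage your approach, the missing ingredient is exactly this: first make $f_{A}$ smooth on $F\setminus A$ while preserving its boundary values, and only then blend in $f_{B}$.
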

In general, a universal differentiability set in $\mathbb{R}^{d}$ may be decomposed as a countable union of non-universal differentiability sets. Indeed, let $S$ be a universal differentiability set in $\mathbb{R}^{2}$ with Lebesgue measure zero (such a set is given in \cite{preiss90}). By a result of Alberti, Cs\"ornyei and Preiss in \cite{acp05}, there exist Lipschitz functions $f,g:\mathbb{R}^{2}\to\mathbb{R}$ such that $f$ and $g$ have no common points of differentiability inside $S$. Writing $D_{f}$ for the set of points of differentiability of $f$, we get that
\begin{equation*}
	S=(S\setminus D_{f})\cup (S\cap D_{f})
\end{equation*}
is a decomposition of $S$ as a union of two non-universal differentiabilty sets. 

The above discussion indicates that questions about decomposability of universal diffferentiability sets are closely related to questions about simultaneous differentiability of Lipschitz functions. The latter is an active area of current research: In \cite{lindenstrausstiserpreiss12}, Lindenstrauss, Preiss and Ti\v ser prove  that every pair of Lipschitz functions on a Hilbert space have a common point of differentiability; the corresponding question for a triple of Lipschtiz functions remains open. 

$G_{\delta}$ sets arise naturally in the theory of universal differentiability sets because they admit an equivalent metric with respect to which they are complete, see \cite{preiss90} and \cite{preissspeight2014}. We therefore considered the question of whether it is possible to weaken the assumption on $A$ in Lemma~\ref{lemma:closed2} to be $G_{\delta}$ rather than closed. It turns out that it is not possible to improve Lemma~\ref{lemma:closed2} in this manner: Cs\"ornyei, Preiss and Ti\v ser construct, in \cite{csornyeipreisstiser}, a universal differentiability set $S$ in $\mathbb{R}^{2}$ which may be decomposed as the union of two non-universal differentiability sets $U$ and $V$, where $U$ is uniformly purely unrectifiable. Since any uniformly purely unrectifiable set is both a non-universal differentiability set and contained in a $G_{\delta}$ uniformly purely unrectifiable set (see \cite{csornyeipreisstiser}), it follows that the set $U$ may be taken to be a $G_{\delta}$ subset of $S$.

We define the kernel, $\K(S)$, of a set $S\subseteq\mathbb{R}^{d}$ with respect to universal differentiability similarly to the kernel of $S$ with respect to non-$\sigma$-porosity, see \cite[Definition~3.2]{zelenypelant04}. 
\begin{definition}\label{def:kernel}
	\item Given a set $S\subseteq\mathbb{R}^{d}$, we let
	\begin{equation*}
		\K(S)=S\setminus\left\{x\in S\text{ : }\exists\varepsilon>0\text{ such that }B(x,\varepsilon)\cap S\text{ is a non-UDS}\right\}.
	\end{equation*}
\end{definition}
Note that $\ker(S)$ is closed as a subset of $S$. Through an application of Lemma~\ref{lemma:closed2} we obtain the following theorem, which shows that the kernel of a universal differentiability set can be thought of as the core of the set. We remark that universal differentiability sets behave similarly to non-$\sigma$-porous sets in this respect - see \cite[Lemma~3.4]{zelenypelant04}.
\begin{theorem}\label{theorem:kernel}
	Suppose $F\subseteq\mathbb{R}^{d}$ is a universal differentiability set. Then,
	\begin{enumerate}[(i)]
		\item $\K(F)\subseteq F$ is a universal differentiability set.
		\item $\K(\K(F))=\K(F)$ and $F\setminus \K(F)$ is a non-universal differentiability set. In particular, for each Lipschitz function $f:\mathbb{R}^{d}\to\mathbb{R}$, the differentiability points of $f$ in $\K(F)$ form a dense subset of $\K(F)$.
	\end{enumerate}
\end{theorem}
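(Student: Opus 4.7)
The strategy is to prove that $F\setminus\K(F)$ is non-UDS and then deduce (i) and the remaining parts of (ii) from this together with Lemma~\ref{lemma:closed2}. First, observe that $F\setminus\K(F)$ is, by the very definition of $\K(F)$, a union of relatively open subsets of $F$ (each of the form $B(x,\varepsilon)\cap F$), so $\K(F)$ is relatively closed in $F$. Applying Lemma~\ref{lemma:closed2} with $A=\K(F)$ already shows that at least one of $\K(F)$ and $F\setminus\K(F)$ is a UDS, so proving $F\setminus\K(F)$ non-UDS immediately yields (i).

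To establish that $F\setminus\K(F)$ is non-UDS, invoke the Lindel\"of property to produce a countable collection $(B_n)_{n\in\N}$ of open balls with rational centres and radii, each satisfying $B_n\cap F$ non-UDS, such that $F\setminus\K(F)=\bigcup_n(B_n\cap F)$. Set $F_0=F$ and $F_n=F\setminus\bigcup_{k\le n}B_k$. An induction using Lemma~\ref{lemma:closed2} shows that $F_n$ is a UDS for every $n$: assuming $F_{n-1}$ is a UDS, note that $B_n\cap F_{n-1}$ is relatively open in $F_{n-1}$ and non-UDS (as a subset of the non-UDS $B_n\cap F$), so Lemma~\ref{lemma:closed2} forces its relatively closed complement $F_n=F_{n-1}\setminus B_n$ to be a UDS.

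The principal obstacle is to bridge the gap from this finite-stage assertion to the statement that the full countable union $F\setminus\K(F)=\bigcup_n(B_n\cap F)$ is non-UDS; this is non-trivial because the class of non-UDSs is not closed under countable unions in general. My plan is to exhibit a single Lipschitz function $f\colon\R^d\to\R$ that is non-differentiable at every point of $F\setminus\K(F)$, by combining Lipschitz witnesses $f_n$ of the non-UDS of each $B_n\cap F$ through a weighted series $f=\sum_n c_n\phi_n f_n$, using bump functions $\phi_n$ localised to $B_n$ and rapidly-decaying coefficients $c_n$. The technical heart lies in choosing these ingredients so that, for each $x\in F\setminus\K(F)$, the non-differentiability inherited from some $f_n$ with $x\in B_n$ is not destroyed by cancellation from the contributions of other $f_m$.

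Once the non-UDS property of $F\setminus\K(F)$ is secured, Lemma~\ref{lemma:closed2} delivers (i). For (ii), the identity $\K(\K(F))=\K(F)$ is proved by contradiction: if there were $x\in\K(F)$ and $\varepsilon>0$ with $B(x,\varepsilon)\cap\K(F)$ non-UDS, then $B(x,\varepsilon)\cap(F\setminus\K(F))$ is also non-UDS as a subset of the non-UDS $F\setminus\K(F)$, while $B(x,\varepsilon)\cap F$ is a UDS (since $x\in\K(F)$) and contains $B(x,\varepsilon)\cap\K(F)$ as a relatively closed subset; Lemma~\ref{lemma:closed2} then forces one of these two pieces to be a UDS, a contradiction. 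The density of differentiability points inside $\K(F)$ is now immediate: otherwise some Lipschitz $f$ and some ball $B(x,\varepsilon)$ with $x\in\K(F)$ would witness $B(x,\varepsilon)\cap\K(F)$ as a non-UDS, contradicting $\K(\K(F))=\K(F)$.
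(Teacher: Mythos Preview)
Your overall strategy---prove that $F\setminus\K(F)$ is a non-UDS and then apply Lemma~\ref{lemma:closed2} with $A=\K(F)$---is exactly the paper's approach. The gap is that you have not actually proved the central claim, only outlined a plan. You write that ``the principal obstacle'' is passing from finite unions to the countable union, and that your ``plan'' is to build $f=\sum_n c_n\phi_n f_n$ with bump functions on the balls $B_n$; but you neither carry this out nor address the genuine difficulty that the $B_n$ may overlap infinitely at a given point, so the non-differentiability contributed by one term could in principle be cancelled by the tail. (Your preliminary induction showing each $F_n$ is a UDS is also unnecessary: it does not help with the countable union and is not used later.)

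The paper resolves this step by invoking Corollary~\ref{cor:nudschar}, which says precisely that a set is a non-UDS whenever every point has a relative neighbourhood that is a non-UDS. Since every $x\in F\setminus\K(F)$ admits, by definition, some $\varepsilon>0$ with $B(x,\varepsilon)\cap F$ a non-UDS, and $B(x,\varepsilon)\cap(F\setminus\K(F))\subseteq B(x,\varepsilon)\cap F$, Corollary~\ref{cor:nudschar} applies directly. That corollary is itself proved via Lemma~\ref{lemma:main} and Lemma~\ref{lemma:removingboundaries}: one covers by \emph{boxes with pairwise disjoint interiors} (so each interior point lies in exactly one box, avoiding your overlap problem) and builds a single Lipschitz function nowhere differentiable inside; the residual boundary set is then discarded. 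So the missing idea in your sketch is to replace the overlapping balls by a disjoint-interior box covering, or simply to cite Corollary~\ref{cor:nudschar}.

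Your argument for $\K(\K(F))=\K(F)$ is correct. The paper's version is slightly slicker: it observes $B(x,\varepsilon)\cap\K(F)=\K(B(x,\varepsilon)\cap F)$ and applies part~(i) directly, but your contradiction via Lemma~\ref{lemma:closed2} is a valid alternative.
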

An iterative construction based on the proof of Lemma~\ref{lemma:closed2} leads to our second main theorem:
\begin{theorem}\label{theorem:fsigma}
	Suppose that $E\subseteq \mathbb{R}^{d}$ is a universal differentiability set and that $(A_{i})_{i=1}^{\infty}$ is a collection of relatively closed subsets of $E$ satisfying $E=\bigcup_{i=1}^{\infty}A_{i}$. Then at least one of the sets $A_{i}$ is a universal differentiability set.
\end{theorem}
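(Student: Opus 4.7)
The natural approach is to argue by contradiction, iterating Lemma~\ref{lemma:closed2} both at the set-theoretic level and, more importantly, at the level of its constructive proof, in order to produce a single Lipschitz function on $\mathbb{R}^{d}$ that witnesses non-differentiability at every point of $E$. Assume for contradiction that no $A_{i}$ is a UDS.

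Setting $F_{0}=E$, I would first check by induction that $F_{n}:=E\setminus\bigcup_{i=1}^{n}A_{i}$ is a UDS for every $n\geq 0$. Indeed, if $F_{n-1}$ is a UDS, then $A_{n}\cap F_{n-1}$ is relatively closed in $F_{n-1}$ (because $A_{n}$ is relatively closed in $E\supseteq F_{n-1}$) and, as a subset of the non-UDS $A_{n}$, is itself non-UDS. Lemma~\ref{lemma:closed2} applied to $F_{n-1}$ with $A=A_{n}\cap F_{n-1}$ then forces $F_{n}=F_{n-1}\setminus A_{n}$ to be a UDS.

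This set-theoretic iteration by itself is not a contradiction, since $\bigcap_{n}F_{n}=\emptyset$ while the UDS property is not preserved under decreasing countable intersections. The additional ingredient must come from the \emph{constructive} content of the proof of Lemma~\ref{lemma:closed2}, which is expected to produce, from a Lipschitz function witnessing non-differentiability on $A$, a Lipschitz function on $\mathbb{R}^{d}$ with controlled additional properties that establishes the UDS property of $F\setminus A$. I would run this construction iteratively: at stage $n$, starting from a Lipschitz function $g_{n-1}$ with no differentiability point in $A_{1}\cup\cdots\cup A_{n-1}$, produce $g_{n}$ with no differentiability point in $A_{1}\cup\cdots\cup A_{n}$, while keeping $\sum_{n}\|g_{n}-g_{n-1}\|_{\infty}$ and $\sum_{n}\lip(g_{n}-g_{n-1})$ finite. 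The uniform limit $g$ of the sequence $(g_{n})$ is then a Lipschitz function on $\mathbb{R}^{d}$.

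The main obstacle is to ensure that the non-differentiability of $g_{n}$ at each $x\in A_{1}\cup\cdots\cup A_{n}$ is inherited by $g$, since a uniform limit of Lipschitz functions non-differentiable at a point may perfectly well be differentiable at that point. The remedy is to carry through the iteration a quantitative modulus of non-differentiability: at each previously handled $x\in A_{i}$ (for $i\leq n$), one tracks a scale $\rho_{n}(x)>0$ together with a lower bound $\delta_{n}(x)>0$ on the oscillation of finite differences of $g_{n}$ at $x$ below scale $\rho_{n}(x)$. One then chooses each perturbation $g_{n+1}-g_{n}$ to be so small, relative to the already-secured products $\delta_{m}(x)\rho_{m}(x)$ for $m\leq n$, that the witnessing oscillation survives to the limit. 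The resulting Lipschitz function $g$ has no differentiability point in $\bigcup_{i}A_{i}=E$, contradicting that $E$ is a UDS and completing the proof.
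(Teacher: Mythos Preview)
Your overall architecture---argue by contradiction, iteratively build Lipschitz functions $g_{n}$ nowhere differentiable on $A_{1}\cup\cdots\cup A_{n}$, and pass to a uniform Lipschitz limit $g$---matches the paper's. The gap is in the passage to the limit. Controlling only $\|g_{n+1}-g_{n}\|_{\infty}$ and $\lip(g_{n+1}-g_{n})$ cannot force non-differentiability of $g$ at a prescribed point $x$: if $\|g-g_{n}\|_{\infty}=\eta>0$, then the difference quotients of $g$ and $g_{n}$ at $x$ at scale $t$ may differ by $2\eta/t$, which blows up as $t\to 0$, so any small-scale oscillation of $g_{n}$ can be erased in $g$. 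Your proposed remedy---recording a scale $\rho_{n}(x)$ and a modulus $\delta_{n}(x)$ and making later perturbations small relative to $\delta_{n}(x)\rho_{n}(x)$---fails for two reasons. First, there are uncountably many $x\in A_{1}\cup\cdots\cup A_{n}$, and in general $\inf_{x}\delta_{n}(x)\rho_{n}(x)=0$, so no single positive bound on the next perturbation is available. Second, even for one fixed $x$, preserving oscillation at the single scale $\rho_{n}(x)$ does not give non-differentiability of $g$, which requires oscillation at \emph{arbitrarily small} scales; driving the witnessing scale to $0$ forces the admissible perturbation to $0$ as well.

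The paper sidesteps this entirely by never relying on mere smallness of $g-g_{n}$ near $A_{n}$. After replacing $(A_{i})$ by the increasing sequence of finite unions (still relatively closed, still non-UDS by Lemma~\ref{lemma:closed2}), it covers $E\setminus A_{n}$ by boxes $\{U_{i,n}\}_{i}$ with $\diam(U_{i,n})/\dist(U_{i,n},A_{n})\to 0$ and with each $U_{i,n+1}$ contained in some $U_{j,n}$ (Lemma~\ref{lemma:respectcover}), and constructs $f_{n+1}$ so that $f_{n+1}=f_{n}$ \emph{exactly} on $\mathbb{R}^{d}\setminus\bigcup_{i}\inter(U_{i,n})$ (Lemma~\ref{lemma:closed}, part~1). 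The nesting then gives $f=f_{n}$ on that same set for the limit $f$. For $x\in A_{n}$, every ray from $x$ meets this set except on intervals of length $o(t)$ at distance $t$ from $x$; Lemma~\ref{lemma:dirdiff=} (packaged as Lemma~\ref{lemma:closed}, part~2) then yields that all one-sided directional derivatives of $f$ and $f_{n}$ at $x$ agree in existence and value. Non-differentiability is thus transported to the limit \emph{exactly}, with no quantitative bookkeeping, and $f$ is nowhere differentiable on $E$ away from the box boundaries, which are removed via Lemma~\ref{lemma:removingboundaries}. The missing idea in your proposal is precisely this: the perturbation at stage $n$ must be \emph{supported} inside boxes that shrink relative to their distance from $A_{n}$, not merely small in sup- and Lipschitz norm.
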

We finish this section with a sketch of the proof of Lemma~\ref{lemma:closed2}. Suppose that the contrary holds for some universal differentiability set $F$ and a relatively closed, non-universal differentiability set $A\subseteq F$. We aim to construct a Lipschitz function $f:\mathbb{R}^{d}\to\mathbb{R}$ which is nowhere differentiable in $F$. This provides the desired contradiction. The set $F\setminus A$ is an open subset of $F$ and a non-universal differentiability set. Accordingly, we may cover $F\setminus A$ by a countable collection of boxes $U_{i}$ with pairwise disjoint interiors, such that $F\cap U_{i}$ is a non-universal differentiability set for each $i$. On each box $U_{i}$ we construct the function $f$ so that it is nowhere differentiable in $F\cap (U_{i}\setminus \partial U_{i})$ and satisfies $f=h$ on $\partial U_{i}$, where $h:\mathbb{R}^{d}\to \mathbb{R}$ is a fixed Lipschitz function nowhere differentiable on $A$. Outside of the boxes $U_{i}$ we simply set $f=h$. The obtained Lipschitz function $f$ is clearly nowhere differentiable in $F\cap(U_{i}\setminus\partial U_{i})$ for each $i$, whilst the non-differentiability of $f$ at all points of $A$ is achieved by controlling the size of the boxes $U_{i}$. Using the closedness of $A$, we ensure that the diameter of the boxes $U_{i}$ goes to zero with their distance from $A$. Therefore, around `difficult' points in $A$, the boundaries of the boxes $U_{i}$ become increasingly concentrated. Since $f=h$ on these boundaries, we get that the differentiability of $f$ coincides with that of $h$ at points of $A$.

\section{Construction.}
In this section we will prove the main results stated in Section 2. We begin with a summary of the notation that we will use: We fix an integer $d\geq 2$ and let $e_{1},e_{2},\ldots,e_{d}$ denote the standard basis of $\mathbb{R}^{d}$. For a point $x\in\mathbb{R}^{d}$ and $\varepsilon>0$, we let $B(x,\varepsilon)$ (respectively $\overline{B}(x,\varepsilon)$) denote the open (respectively closed) ball with centre $x$ and radius $\varepsilon$. The corresponding norm $\left\|-\right\|$ is the standard Euclidean norm on $\mathbb{R}^{d}$. Given a set $S\subseteq\mathbb{R}^{d}$, we let $\mbox{Int}(S)$ denote the interior, $\mbox{Clos}(S)$ denote the closure and $\partial{S}$ denote the boundary of $S$. For non-empty subsets $A$ and $B$ of $\mathbb{R}^{d}$ we let
\begin{equation*}
\diam(A)=\sup\left\{\left\|a'-a\right\|\mbox{ : }a,a'\in{A}\right\}\mbox{ and }\dist(A,B)=\inf\left\{\left\|b-a\right\|\mbox{ : }a\in{A},b\in{B}\right\}.
\end{equation*}
When $A=\left\{a\right\}$ is a singleton, we will just write $\mbox{dist}(a,B)$ rather than $\mbox{dist}(\left\{a\right\},B)$. We also adopt the convention $\mbox{dist}(A,\emptyset)=1$ for all $A\subseteq\mathbb{R}^{d}$. We write $\lip(f)$ for the Lipschitz constant of a Lipschitz function $f$. Moreover, given $e\in S^{d-1}$ and function $f\colon \R^{d}\to\R$ we let $f'(x,e):=\lim_{t\to 0+}\frac{f(x+te)-f(x)}{t}$ denote the one-sided directional derivative of $f$, provided that the limit exists. The restriction of $f$ to a set $S$ is denoted by $f|_{S}$ and the support of $f$ by $\mbox{supp}(f)$.

A subset $U$ of $\mathbb{R}^{d}$ is called a box if $U=I_{1}\times I_{2}\times \ldots\times I_{d}$ for some sequence of closed, bounded intervals $I_{1},\ldots,I_{d}\subseteq\mathbb{R}$. Writing $I_{k}=[a_{k},b_{k}]$ for each $k$, we call a set $Y\subseteq\partial U$ a face of $U$ if there exist $m\in\left\{1,\ldots,d\right\}$ and $y\in\left\{a_{m},b_{m}\right\}$ such that $Y=I_{1}\times\ldots\times I_{m-1}\times\left\{y\right\}\times I_{m+1}\times\ldots\times I_{d}$. Note that each face of $U$ is a subset of a $(d-1)$-dimensional hyperplane which is orthogonal to exactly one of the vectors $e_{1},\ldots,e_{d}$.

\begin{lemma}\label{lemma:respectcover}
	Suppose that $A$ is a relatively closed subset of $F\subseteq\mathbb{R}^{d}$. Then, there exists a collection $\left\{U_{i}\right\}_{i=1}^{\infty}$ of boxes with pairwise disjoint interiors such that 
	\begin{equation*}
		F\cap\bigcup_{i=1}^{\infty}U_{i}=F\setminus A\quad\text{ and }\quad\frac{\diam(U_{i})}{\dist(U_{i},A)}\to 0\mbox{ as }i\to\infty.
	\end{equation*}
	Furthermore, if $\left\{V_{i}\right\}_{i=1}^{\infty}$ is a collection of boxes with pairwise disjoint interiors satisfying $F\setminus A\subseteq F\cap\bigcup_{i=1}^{\infty}V_{i}$, then the collection $\left\{U_{i}\right\}_{i=1}^{\infty}$ can be chosen so that for each index $i$ there exists an index $j$ such that $U_{i}\subseteq{V_{j}}$.
\end{lemma}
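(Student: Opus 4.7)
The idea is a Whitney-type decomposition of the open set $W:=\R^d\setminus\overline{A}$, refined so that the diameter/distance ratio tends to zero along the enumeration, and (for the furthermore) further split by the prescribed boxes $V_j$. The starting observation is that $A$ being relatively closed in $F$ gives $A=F\cap\overline{A}$, hence $F\cap W=F\setminus A$.

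I first apply Whitney's decomposition theorem to $W$ to obtain a countable family of closed dyadic cubes $\{Q_n\}$ with pairwise disjoint interiors, $\bigcup_n Q_n=W$, and $\diam(Q_n)\leq\dist(Q_n,\overline{A})\leq C(d)\diam(Q_n)$. Because $Q_n$ is closed, $\dist(Q_n,A)=\dist(Q_n,\overline{A})$, so ratios are bounded by $1$ but may not tend to zero. (The edge cases $W=\emptyset$ and $W=\R^d$ cause no trouble: take the empty collection in the first case, and any tiling of $\R^d$ into cubes with diameters tending to zero in the second, using the convention $\dist(\cdot,\emptyset)=1$.) To force the ratio to zero, I subdivide each $Q_n$ into $n^d$ equal sub-cubes of diameter $\diam(Q_n)/n$, each still at distance $\geq\dist(Q_n,A)\geq\diam(Q_n)$ from $A$ and therefore of ratio $\leq 1/n$; enumerating these sub-cubes by increasing $n$, and within each $n$ by the $n^d$ children in any order, produces a sequence along which $n$, and thus the ratio bound $1/n$, tends to the required limit.

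For the furthermore, given $\{V_j\}_{j=1}^\infty$, I intersect each refined sub-cube $\tilde Q$ above with every $V_j$ that meets it, obtaining $\tilde Q\cap V_j$, a (possibly lower-dimensional) box contained in $V_j$. The resulting collection has pairwise disjoint interiors — inherited from the disjointness of both the sub-cubes and the $V_j$'s — and its total union equals $W\cap\bigcup_j V_j$, which meets $F$ in $(F\setminus A)\cap\bigcup_j V_j=F\setminus A$ by the covering hypothesis $F\setminus A\subseteq F\cap\bigcup_j V_j$. Each intersection inherits the ratio bound $\leq 1/n$ from its parent $\tilde Q$.

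The main subtlety is the enumeration: a single sub-cube $\tilde Q$ may meet infinitely many $V_j$ (if they accumulate near the boundary of $\tilde Q$), so listing the intersections naively would produce infinitely many pieces sharing the same ratio bound $1/n$, preventing the ratio from tending to zero. To handle this, I enumerate the $V_j$'s meeting $\tilde Q$ as $V_{j_1},V_{j_2},\ldots$ and further subdivide $\tilde Q\cap V_{j_k}$ into finitely many sub-boxes of diameter at most $\diam(\tilde Q)/k$; these inherit distance $\geq\dist(\tilde Q,A)\geq n\diam(\tilde Q)$, giving ratio $\leq 1/(nk)$. Since $nk<1/\varepsilon$ admits only finitely many pairs $(n,k)$ and each such pair contributes only finitely many pieces, for every $\varepsilon>0$ only finitely many final pieces have ratio exceeding $\varepsilon$. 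Ordering the pieces in non-increasing ratio then yields the desired enumeration $\{U_i\}$ with $\diam(U_i)/\dist(U_i,A)\to 0$ and each $U_i$ contained in some $V_j$, completing the construction.
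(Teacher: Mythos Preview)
Your argument is correct and uses the same core idea as the paper: cover $F\setminus A$ by boxes with pairwise disjoint interiors lying at positive distance from $A$, intersect with the prescribed $V_j$, and subdivide to drive the diameter/distance ratio to zero. The only difference is the order of operations. The paper first intersects the initial boxes $S_i$ with the $V_k$, relabels the resulting boxes $S_i\cap V_k$ as a single sequence $\{W_i\}_{i\geq 1}$, and only then partitions each $W_i$ into finitely many sub-boxes with ratio at most $2^{-i}$. Because each $W_i$ is already contained in some $V_k$, this gives the ``furthermore'' clause for free, and the issue you flag---a fixed cube meeting infinitely many $V_j$---simply never arises. Your route (subdivide first, intersect afterwards, then subdivide again to repair the enumeration) works, but swapping the order as the paper does eliminates the second subdivision and the $(n,k)$ bookkeeping entirely.
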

\begin{proof}
	Let $\left\{S_{i}\right\}_{i=1}^{\infty}$ be a collection of boxes with pairwise disjoint interiors such that $F\cap\bigcup_{i=1}^{\infty}S_{i}=F\setminus A$. Such a collection is easy to construct using the fact that $A$ is relatively closed in $F$. For each $i,k\geq{1}$ we let $S_{i,k}=S_{i}\cap V_{k}$ and note that $F\cap\bigcup_{i,k=1}^{\infty}S_{i,k}=F\setminus A$. Moreover, $\mathcal{W}=\left\{S_{i,k}\right\}_{i,k\geq{1}}$ is a countable collection of boxes with pairwise disjoint interiors. After relabelling, we can write $\mathcal{W}=\left\{W_{i}\right\}_{i=1}^{\infty}$. 
	
	Set $p_{0}=0$. For each $i\geq{1}$, partition the box $W_{i}$ into a finite number of boxes $U_{p_{i-1}+1},\ldots,U_{p_{i}}$ with pairwise disjoint interiors such that
	\begin{equation*}
		\frac{\diam(U_{j})}{\dist(U_{j},A)}\leq 2^{-i}\mbox{ for }p_{i-1}+1\leq j\leq p_{i}.
	\end{equation*}
	The assertions of the Lemma are now readily verified.
\end{proof}
The proof of Theorem~\ref{theorem:fsigma} will involve constructing a sequence of Lipschitz functions which converge to a Lipschitz function nowhere differentiable on the set $E$. We will need the next Lemma in order to pick, at each step of the construction, a Lipschitz function nowhere differentiable on part of the set.
\begin{lemma}\label{obs:nearzero}
	Let $\Omega$ be a bounded subset of $\mathbb{R}^{d}$. A set $E\subseteq\mathbb{R}^{d}$ is a non-universal differentiability set if and only if for every $\varepsilon>0$, there exists a Lipschitz function $f\colon \R^{d}\to\R$ such that $\left\|f|_{\Omega}\right\|_{\infty}\leq\varepsilon$, $\lip(f)\leq\varepsilon$ and $f$ is nowhere differentiable in $E$.
\end{lemma}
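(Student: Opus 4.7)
The proof falls naturally into a trivial and a constructive direction, and the latter is only a routine normalisation argument.

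The reverse implication is immediate: applying the hypothesis with, say, $\varepsilon = 1$ produces a Lipschitz function nowhere differentiable in $E$, so $E$ fails the universal differentiability property.

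For the forward direction, the plan is to take an arbitrary Lipschitz witness $g \colon \R^d \to \R$ of the non-UDS property of $E$ and then rescale and vertically translate it so as to meet the two smallness bounds simultaneously. If $E$ or $\Omega$ is empty the claim reduces to an elementary special case (one may take $f = 0$, or simply $f = (\varepsilon/\lip(g)) g$ when only the $\|\cdot\|_\infty$ condition is vacuous), so assume both are non-empty; then $g$ is non-constant and $L := \lip(g) > 0$. Pick a base point $x_0 \in \Omega$ and define
$$f(x) := c\bigl(g(x) - g(x_0)\bigr)$$
for a constant $c > 0$ to be chosen. Addition of a constant and multiplication by a non-zero scalar both preserve failure of differentiability at any point, so $f$ is nowhere differentiable on $E$ and has Lipschitz constant equal to $cL$. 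Moreover, for $x \in \Omega$ the Lipschitz bound on $g$ gives $|f(x)| \leq cL\|x - x_0\| \leq cL \cdot \diam(\Omega)$. The choice $c := \varepsilon / (L \max(1, \diam(\Omega)))$ then yields both $\lip(f) \leq \varepsilon$ and $\|f|_{\Omega}\|_\infty \leq \varepsilon$.

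There is no real obstacle in this argument; the content of the lemma is convenience rather than difficulty. Its purpose becomes clear only downstream: the iterative construction behind Theorem~\ref{theorem:fsigma} will build a Lipschitz function nowhere differentiable on $E$ as a convergent sum of Lipschitz perturbations, and at each stage one needs to select a perturbation whose $L^\infty$ and Lipschitz norms are small enough, over a prescribed bounded region, that the partial sums remain Cauchy and the non-differentiability gained at previous stages is not destroyed. The present lemma is exactly what licenses this selection.
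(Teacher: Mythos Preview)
Your proof is correct and follows essentially the same approach as the paper: both arguments use that differentiability is preserved under affine rescaling $g\mapsto \alpha g + \beta$ to pass between an arbitrary Lipschitz witness and one with prescribed small norms. The only cosmetic differences are that the paper runs the contrapositive (showing the small-norm criterion implies $E$ is a UDS) and controls $\left\|h|_{\Omega}\right\|_{\infty}$ by dividing through by $\left\|g|_{\Omega}\right\|_{\infty}+\lip(g)$ directly, whereas you argue directly and control the sup norm via a base point $x_{0}\in\Omega$ and $\diam(\Omega)$.
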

\begin{proof}
	We focus only on the non-trivial direction. Suppose that there exists $\varepsilon>0$ such that every Lipschitz function $f\colon \R^{d}\to\R$, with $\left\|f_{\Omega}\right\|_{\infty}$ and $\lip(f)\leq\varepsilon$, has a point of differentiability inside $E$. We may assume that $\varepsilon<1$.
	
	Fix a Lipschitz function $g\colon \R^{d}\to\R$. We show that $g$ has a point of differentiability inside $E$. We note that $\left\|g|_{\Omega}\right\|_{\infty}+\lip(g)$ is finite, since $\Omega$ is bounded. Further, we may assume that $\left\|g|_{\Omega}\right\|_{\infty}+\lip(g)\neq{0}$.  Let
	\begin{equation*}
		h(x)=\frac{\varepsilon}{2}+\frac{\varepsilon}{4(\left\|g|_{\Omega}\right\|_{\infty}+\lip(g))}g(x)\quad\text{ for all }x\in\mathbb{R}^{d}.
	\end{equation*}
	Note that $\left\|h(x)-\frac{\varepsilon}{2}\right\|\leq\frac{\varepsilon}{4}$ for all $x\in\Omega$. Hence $\left\|h|_{\Omega}\right\|_{\infty}<\varepsilon$. Moreover, we have $h\colon \R^{d}\to\R$ with $\mbox{Lip}(h)\leq\frac{\varepsilon}{4}<\varepsilon$. Hence, there exists a point $x\in{E}$ such that $h$ is differentiable at $x$. But then  $g$ is also differentiable at $x$ because $g=\alpha h+\beta$ where $\alpha,\beta\in\mathbb{R}$ are fixed constants. Since $g\colon \R^{d}\to\R$ was arbitrary, we deduce that $E$ is a universal differentiability set.
\end{proof}
Using the next Lemma, we will later be able to ignore points lying in the boundaries of boxes.
\begin{lemma}\label{lemma:removingboundaries}
	Suppose $E$ is a subset of $\mathbb{R}^{d}$ and $\left\{U_{i}\right\}_{i=1}^{\infty}$ is a collection of boxes in $\mathbb{R}^{d}$ such that $E\setminus\bigcup_{i=1}^{\infty}\partial U_{i}$ is a non-universal differentiability set. Then $E$ is a non-universal differentiability set.
\end{lemma}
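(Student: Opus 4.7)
I would begin by invoking Lemma~\ref{obs:nearzero} to pick, for an arbitrarily small $\eta>0$, a Lipschitz function $g:\mathbb{R}^{d}\to\mathbb{R}$ nowhere differentiable on $E\setminus\bigcup_{i}\partial U_{i}$ with $\lip(g)\le\eta$. The task is then to add a Lipschitz perturbation $\Phi$ so that $f:=g+\Phi$ is nowhere differentiable on all of $E$. The key structural observation is that each face of each box $U_{i}$ lies in a coordinate hyperplane, so $\bigcup_{i}\partial U_{i}$ is contained in a countable union of coordinate hyperplanes $H_{n}=\{x\in\mathbb{R}^{d}:x_{k_{n}}=c_{n}\}$ indexed by $k_{n}\in\{1,\ldots,d\}$ and $c_{n}\in\mathbb{R}$. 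I would enumerate the faces of all boxes as $F_{1},F_{2},\ldots$ with $F_{n}\subseteq H_{n}$.

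For each face I would attach a Lipschitz `wedge' $\phi_{n}(x)=\beta\,\rho_{n}(x)\,|x_{k_{n}}-c_{n}|$, where $\beta>\eta$ is a fixed constant and $\rho_{n}$ is a smooth cutoff equal to $1$ on $F_{n}$ and supported in a narrow neighbourhood $V_{n}$ of $F_{n}$. A direct calculation gives $\lip(\phi_{n})\le 2\beta$ independently of the width of $V_{n}$, and $\phi_{n}$ has a one-sided directional derivative jump of magnitude $2\beta$ in direction $e_{k_{n}}$ at each point of $F_{n}$, while being smooth away from $V_{n}\cap H_{n}$. Setting $\Phi:=\sum_{n}\phi_{n}$ and $f:=g+\Phi$, the non-differentiability of $f$ on $E$ would be verified in two cases. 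At $x\in E\cap F_{n_{0}}$, the jump of $\Phi$ in direction $e_{k_{n_{0}}}$ is at least $2\beta$ (only $\phi_{n_{0}}$, and possibly parallel $\phi_{n}$'s with $F_{n}\ni x$, contribute to that jump, all with the same sign), exceeding the cancelling jump of $g$ (bounded by $2\lip(g)\le 2\eta<2\beta$), so $f$ is non-differentiable in direction $e_{k_{n_{0}}}$. At $x\in E\setminus\bigcup_{i}\partial U_{i}$, each $\phi_{n}$ is smooth at $x$, so by local finiteness $\Phi$ is smooth and $f=g+\Phi$ inherits the non-differentiability of $g$.

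The main technical challenge is the geometric choice of the neighbourhoods $V_{n}$, which must satisfy two competing requirements. First, the collection $\{V_{n}\}$ must be locally finite with uniformly bounded multiplicity, so that $\Phi$ is well-defined and globally Lipschitz with constant controlled by $\beta$ times that multiplicity. Second, each $V_{n}$ must be narrow enough in the coordinate direction $e_{k_{n}}$ that $V_{n}\cap H_{n}$ does not contain points of $E\setminus\bigcup_{i}\partial U_{i}$; otherwise $\phi_{n}$ would acquire non-differentiabilities at those points, opening the door to cancellation with $g$. Since the faces are countable, both conditions can be arranged by a diagonal choice of widths $\delta_{n}$, each selected small relative to the distances from $F_{n}$ to the other faces and to the set $(E\setminus\bigcup_{i}\partial U_{i})\cap H_{n}$.
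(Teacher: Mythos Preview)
Your construction has two genuine gaps, both hidden in the final ``diagonal choice of widths'' step.

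\emph{Local finiteness can fail.} Nothing in the hypotheses prevents the faces $F_{n}$ from accumulating: the boxes $U_{i}$ are an arbitrary countable family (not even assumed to have pairwise disjoint interiors), so a single point may lie in the closure of infinitely many faces. Since each $V_{n}$ must contain $F_{n}$, no choice of widths $\delta_{n}$ can make $\{V_{n}\}$ locally finite at such a point, and without local finiteness (or at least summable Lipschitz constants) the sum $\Phi=\sum_{n}\phi_{n}$ need not be Lipschitz. Shrinking $\delta_{n}$ relative to ``distances from $F_{n}$ to the other faces'' does not help when those distances are zero.

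\emph{The second condition is not attainable.} You require the set $\{\rho_{n}>0\}\cap H_{n}$ to avoid $E\setminus\bigcup_{i}\partial U_{i}$. But points of $E\setminus\bigcup_{i}\partial U_{i}$ lying in the hyperplane $H_{n}$ may approach $F_{n}$: take $d=2$, $U_{1}=[0,1]^{2}$, $F_{1}=\{0\}\times[0,1]$, and let $E$ contain the sequence $(0,1+1/k)$, which lies in $H_{1}$ but not on any face and converges to $(0,1)\in F_{1}$. Any neighbourhood $V_{1}$ of $F_{1}$ meets this sequence, so $\phi_{1}$ is non-differentiable there and may cancel the non-differentiability of $g$. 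Narrowing $V_{n}$ in the $e_{k_{n}}$ direction does nothing to $V_{n}\cap H_{n}$, and narrowing tangentially cannot exclude points accumulating on $F_{n}$.

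The paper sidesteps both difficulties by \emph{not} localising to individual faces. It groups the faces by orthogonal direction, writing $\bigcup_{i}\partial U_{i}=\bigcup_{j=1}^{d}H_{j}$ where $H_{j}$ collects all faces orthogonal to $e_{j}$, observes that $p_{j}(H_{j})\subseteq\mathbb{R}$ is countable (hence Lebesgue null), and then appeals to \cite[Lemma~2.1]{dymondmaleva13}, which says that adjoining to a non-UDS a set whose projection in some coordinate direction is null preserves the non-UDS property. The perturbation used there depends on a single coordinate at a time, which is what eliminates the localisation problems you encountered.
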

\begin{proof}
	For $j=1,\ldots,d$, let $H_{j}$ denote the union of all faces of the boxes $U_{i}$ which are orthogonal to $e_{j}$ Then $\bigcup_{i=1}^{\infty}\partial U_{i}=\bigcup_{j=1}^{d}H_{j}$. Moreover, writing $p_{j}$ for the $j$-th co-ordinate projection map on $\mathbb{R}^{d}$, we have that $p_{j}(H_{j})$ is a subset of $\mathbb{R}$ with one-dimensional Lebesgue measure zero, (in fact it is a countable set) for each $j=1,\ldots,d$. The result now follows from \cite[Lemma~2.1]{dymondmaleva13}.
\end{proof}

\begin{lemma}\label{lemma:main}
	Let $E\subseteq\mathbb{R}^{d}$, $\eta>0$ and $\left\{U_{i}\right\}_{i=1}^{\infty}$ be a collection of boxes with pairwise disjoint interiors such that and $E\cap \inter(U_{i})$ is a non-universal differentiability set for each $i$. Then there exists a function $g\colon \R^{d}\to\R$ such that
	\begin{align}
		&\label{smalllipnorm} \left\|g\right\|_{\infty}\leq\eta\text{ and }\lip(g)\leq\eta,\\
		&\label{notdiffint} g\text{ is nowhere differentiable in }\displaystyle\bigcup_{i=1}^{\infty}E\cap\mbox{Int}(U_{i}),\text{ and}\\
		&\label{eq:zerobdy} g(x)=0\quad\text{ whenever }x\in\mathbb{R}^{d}\setminus\left(\displaystyle\bigcup_{i=1}^{\infty}\mbox{Int}(U_{i})\right).
	\end{align}
\end{lemma}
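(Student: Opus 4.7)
My plan is to construct $g$ as the pointwise sum $g = \sum_{i=1}^\infty g_i$, where each $g_i$ is supported in $U_i$, vanishes on $\partial U_i$, has $\|g_i\|_\infty, \lip(g_i) \leq \eta$, and is nowhere differentiable on $E \cap \inter(U_i)$. Because the interiors of the $U_i$ are pairwise disjoint and each $g_i$ vanishes outside $\inter(U_i)$, at each $x \in \R^d$ at most one summand is non-zero; thus $g(x)$ equals $g_i(x)$ for the unique $i$ with $x \in \inter(U_i)$, or else $g(x) = 0$. This immediately gives property \eqref{eq:zerobdy} and the bound $\|g\|_\infty \leq \eta$ of \eqref{smalllipnorm}.

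To construct $g_i$, apply Lemma~\ref{obs:nearzero} to the non-UDS $E \cap \inter(U_i)$ with $\Omega = U_i$ and a small parameter $\varepsilon_i > 0$ to be chosen, obtaining a Lipschitz function $f_i\colon \R^d \to \R$ with $\|f_i|_{U_i}\|_\infty, \lip(f_i) \leq \varepsilon_i$ and nowhere differentiable on $E \cap \inter(U_i)$. Writing $U_i = \prod_{k=1}^d[a_{i,k}, b_{i,k}]$, let
\[
\psi_i(x) = \prod_{k=1}^d (x_k - a_{i,k})(b_{i,k} - x_k),
\]
a smooth polynomial that is strictly positive on $\inter(U_i)$ and vanishes on $\partial U_i$, with $\|\psi_i|_{U_i}\|_\infty$ and $\lip(\psi_i|_{U_i})$ bounded by constants depending only on the side-lengths of $U_i$. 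Set $g_i := \psi_i f_i$ on $U_i$ and $g_i := 0$ elsewhere. Then $\|g_i\|_\infty$ and $\lip(g_i|_{U_i})$ are each bounded by $\varepsilon_i$ times a constant depending only on $U_i$, so choosing $\varepsilon_i$ small makes both at most $\eta$. Since $g_i = 0$ on $\partial U_i$ from both sides, $g_i$ is continuous on $\R^d$, and for $x \in U_i$, $y \notin U_i$ a short segment argument gives $|g_i(x) - g_i(y)| = |g_i(x) - g_i(z)| \leq \eta\|x - z\| \leq \eta\|x - y\|$, where $z \in \partial U_i$ is the exit point of $[x, y]$ from $U_i$, extending the Lipschitz bound to all of $\R^d$. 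At any $x \in E \cap \inter(U_i)$ the cutoff $\psi_i$ is smooth and strictly positive, so $f_i = g_i/\psi_i$ holds locally, and $g_i$ is non-differentiable at $x$ if and only if $f_i$ is, which is so by construction.

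The global Lipschitz bound for $g$ follows by a short case analysis: for any $x, y \in \R^d$, the segment $[x, y]$ meets each convex set $U_i$ in at most one sub-segment. If $x \in \inter(U_i)$ then its exit point $z \in \partial U_i$ satisfies $g(z) = 0$, giving $|g(x)| = |g_i(x) - g_i(z)| \leq \eta\|x - z\|$; if $y \in \inter(U_j)$ the analogous point $w \in \partial U_j$ gives $|g(y)| \leq \eta\|y - w\|$, and since $z$ and $w$ lie on the segment $[x,y]$ we have $\|x-z\| + \|y-w\| \leq \|x-y\|$, so $|g(x) - g(y)| \leq |g(x)| + |g(y)| \leq \eta\|x-y\|$. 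The remaining cases (one or both of $x, y$ outside every $\inter(U_k)$) are handled identically, using $g = 0$ at such points. Property \eqref{notdiffint} follows because $g = g_i$ on a neighbourhood of every $x \in E \cap \inter(U_i)$. The main obstacle of the plan is ensuring $g_i$ is non-differentiable throughout $E \cap \inter(U_i)$ — including at points arbitrarily close to $\partial U_i$ — while also vanishing on $\partial U_i$. This is handled by the polynomial cutoff $\psi_i$: unlike a bump function that drops to zero on a shell inside $U_i$, $\psi_i$ remains strictly positive throughout the open interior and so preserves the non-differentiability of $f_i$ right up to the boundary.
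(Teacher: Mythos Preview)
Your proof is correct and follows essentially the same approach as the paper: obtain via Lemma~\ref{obs:nearzero} a Lipschitz function nowhere differentiable on each $E\cap\inter(U_i)$, multiply it by a smooth cutoff that is strictly positive on $\inter(U_i)$ and vanishes on $\partial U_i$, and patch the pieces together. The only cosmetic differences are that the paper uses an abstract $C^{\infty}$ bump $\varphi_i$ with $\|\varphi_i\|_\infty,\lip(\varphi_i)\leq 1$ (so the parameter $\eta/2$ can be chosen uniformly rather than box-by-box) and leaves the global Lipschitz verification to the reader, whereas you use the explicit polynomial $\psi_i$ and spell out the exit-point segment argument.
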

\begin{proof}
	For $x\in\R^{d}\setminus\left(\bigcup_{i=1}^{\infty}\inter(U_{i})\right)$ we define $g(x)$ according to \eqref{eq:zerobdy}. Given $i\in\N$ we define $g$ on $\inter(U_{i})$ as follows: Let $\varphi=\varphi_{i}\in C^{\infty}(\mathbb{R}^{d})$ be a smooth function satisfying $\varphi(x)>0$ for all $x\in\inter(U_{i})$, $\left\|\varphi\right\|_{\infty}\leq 1$, $\lip(\varphi)\leq 1$ and $\varphi(x)=0$ for all $x\in\mathbb{R}^{d}\setminus\inter(U_{i})$. By Lemma \ref{obs:nearzero}, there exists a Lipschitz function $h=h_{i}\colon \R^{d}\to\R$ such that $h$ is nowhere differentiable in $E\cap\mbox{Int}(U_{i})$, $\left\|h|_{U_{i}}\right\|_{\infty}\leq\eta/2$ and $\lip(h)\leq\eta/2$. We define $g$ on $\inter(U_{i})$ by $g(x)=\varphi(x)h(x)$. The smoothness of $\varphi$ and the fact that $\varphi>0$ on $\inter(U_{i})$ ensure that $g$ inherits all non-differentiability points of $h$ in $\inter(U_{i})$. The assertions of the lemma are now readily verified. 
\end{proof}
The previous two lemmas admit the following corollary:
\begin{corollary}\label{cor:nudschar}
	A set $E\subseteq \mathbb{R}^{d}$ is a non-universal differentiability set if and only if for every $x\in E$, there exists $\varepsilon=\varepsilon_{x}>0$ such that $B(x,\varepsilon)\cap E$ is a non-universal differentiability set.
\end{corollary}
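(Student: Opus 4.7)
The ``only if'' direction is immediate: if $E$ is a non-universal differentiability set then so is every subset of $E$, and in particular so is $B(x,\varepsilon)\cap E$ for every $x\in E$ and every $\varepsilon>0$, since any Lipschitz function nowhere differentiable on $E$ is also nowhere differentiable on any subset.

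For the converse the plan is to patch the hypothesised local non-UDS property into a global non-differentiability statement, using Lemmas~\ref{lemma:main} and~\ref{lemma:removingboundaries}. The first step is to replace the given covering of $E$ by balls $B(x,\varepsilon_{x})$ with a covering by boxes having pairwise disjoint interiors. For each $x\in E$ I would select a dyadic cube $Q(x)\subseteq\R^{d}$ containing $x$, of side length at most $1$ and diameter strictly less than $\varepsilon_{x}$; refining the dyadic scale far enough makes such a cube available and forces $Q(x)\subseteq B(x,\varepsilon_{x})$. The collection $\mathcal{Q}=\{Q(x):x\in E\}$ is countable, and because any two dyadic cubes are either nested or have disjoint interiors while the side lengths of elements of $\mathcal{Q}$ are uniformly bounded above by $1$, every element of $\mathcal{Q}$ is contained in some element of $\mathcal{Q}$ that is maximal with respect to inclusion. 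Enumerating these maximal elements as $\{U_{i}\}_{i=1}^{\infty}$ yields a countable collection of boxes with pairwise disjoint interiors whose union covers $E$, and each $U_{i}$ lies in some $B(x_{i},\varepsilon_{x_{i}})$.

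It then follows that, for each $i$, $E\cap\inter(U_{i})\subseteq B(x_{i},\varepsilon_{x_{i}})\cap E$ is a non-UDS, so Lemma~\ref{lemma:main} (applied with, say, $\eta=1$) produces a Lipschitz function $g\colon\R^{d}\to\R$ which is nowhere differentiable on $\bigcup_{i=1}^{\infty}(E\cap\inter(U_{i}))$. Because $\inter(U_{i})$ and $\partial U_{j}$ are disjoint for every pair $i,j$ (for $i=j$ trivially, and for $i\neq j$ because $\overline{\inter(U_{j})}=U_{j}\subseteq\R^{d}\setminus\inter(U_{i})$), the sets $\bigcup_{i}\inter(U_{i})$ and $\bigcup_{i}\partial U_{i}$ are disjoint and their union covers $\bigcup_{i}U_{i}\supseteq E$, giving
\begin{equation*}
\bigcup_{i=1}^{\infty}\left(E\cap\inter(U_{i})\right)=E\setminus\bigcup_{i=1}^{\infty}\partial U_{i}.
\end{equation*}
This set is therefore a non-UDS, and Lemma~\ref{lemma:removingboundaries} concludes that $E$ itself is a non-UDS. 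The only mildly delicate ingredient is the extraction of the disjoint-interior box covering, which relies on the combinatorial rigidity of dyadic cubes combined with the uniform bound on their side lengths in $\mathcal{Q}$; once this is in place the two preceding lemmas do the remaining work.
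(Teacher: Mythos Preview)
Your proof is correct and follows the same route the paper intends: the paper merely states that the corollary follows from Lemmas~\ref{lemma:removingboundaries} and~\ref{lemma:main}, and you have supplied exactly the expected argument, constructing a pairwise-disjoint box cover so that Lemma~\ref{lemma:main} applies and then removing the boundaries via Lemma~\ref{lemma:removingboundaries}. The dyadic-cube extraction (with the side-length cap to guarantee maximal elements) and the verification that $\inter(U_{i})\cap\partial U_{j}=\emptyset$ are the right details to fill in, and they are handled correctly.
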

In the next lemma, we show that, given a Lipschitz function $h$ and a collection of pairwise disjoint boxes, we can slightly modify the function $h$ so that it becomes differentiable everywhere in the interiors of the boxes and remains unchanged everywhere else.
\begin{lemma}\label{lemma2}
	Let $\left\{U_{i}\right\}_{i=1}^{\infty}$ be a collection of boxes in $\mathbb{R}^{d}$ with pairwise disjoint interiors, $h\colon \R^{d}\to\R$ be a Lipschitz function and $\sigma>0$. Then, there exists a Lipschitz~function $\widehat{h}\colon \R^{d}\to\R$ such that 
	\begin{align}
		&\left\|\widehat{h}-h\right\|_{\infty}\leq\sigma\text{ and }\lip(\widehat{h})\leq\lip(h)+\sigma,\label{eq:lipnorm2}\\
		&\widehat{h}\text{ is everywhere differentiable inside }\displaystyle\bigcup_{i=1}^{\infty}\inter(U_{i})\text{, and}\label{eq:evwdiff2}\\
		&\widehat{h}(x)=h(x)\quad\text{ for all }x\in\mathbb{R}^{d}\setminus\displaystyle\bigcup_{i=1}^{\infty}{\inter(U_{i})}\label{eq:borders2}.
	\end{align}
\end{lemma}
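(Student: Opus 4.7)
The plan is to modify $h$ within each $\inter(U_i)$ by a variable-radius mollification whose radius shrinks to zero as one approaches $\partial U_i$, and to leave $h$ unchanged outside. Since the interiors of the $U_i$ are pairwise disjoint, the modifications decouple across different boxes, and the challenge is to arrange the mollification to be smooth inside each box, continuous up to the boundary, and to inflate the Lipschitz constant by at most $\sigma$.

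For each $i$, I would first construct an auxiliary function $r_i \colon U_i \to [0, \infty)$ that is $C^{\infty}$ and positive on $\inter(U_i)$, vanishes on $\partial U_i$, and satisfies both
$$r_i(x) \leq \varepsilon \min(\dist(x, \partial U_i),\, 1) \qquad \text{and} \qquad \lip(r_i) \leq \varepsilon$$
for a small parameter $\varepsilon>0$ to be chosen in terms of $\sigma$ and $\lip(h)$. Such a function is produced by a standard regularized-distance construction on the open set $\inter(U_i)$, e.g.\ via a Whitney decomposition of $\inter(U_i)$ into dyadic cubes with diameters comparable to their distances from $\partial U_i$, combined with a smooth partition of unity, followed by rescaling by a small constant. (Assume $\lip(h) > 0$; otherwise $h$ is constant and $\widehat h = h$ suffices.)

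Next, fix a non-negative radial mollifier $\psi \in C_c^{\infty}(\R^d)$ with $\mathrm{supp}(\psi) \subseteq B(0, 1)$ and $\int \psi = 1$, and define
$$\widehat{h}(x) = \int_{\R^d} h(x - r_i(x) z)\, \psi(z) \, dz \quad \text{for } x \in \inter(U_i),$$
with $\widehat{h}(x) = h(x)$ for $x \notin \bigcup_{i} \inter(U_i)$. A change of variables rewrites this as $\widehat{h}(x) = (h \ast \psi_{r_i(x)})(x)$, where $\psi_r(y) = r^{-d}\psi(y/r)$. Since $(x, r) \mapsto (h \ast \psi_r)(x)$ is $C^{\infty}$ on $\R^d \times (0, \infty)$ and $r_i$ is $C^{\infty}$ on $\inter(U_i)$, this establishes \eqref{eq:evwdiff2}. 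Continuity of $\widehat h$ across $\partial U_i$, giving \eqref{eq:borders2}, follows because $r_i(x) \to 0$ as $x \to \partial U_i$ and $h$ is continuous.

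Finally, \eqref{eq:lipnorm2} is checked by direct estimates. The inequality $|\widehat{h}(x) - h(x)| \leq \lip(h) \cdot r_i(x) \leq \varepsilon \lip(h)$ on $\inter(U_i)$ yields $\|\widehat h - h\|_{\infty} \leq \sigma$ provided $\varepsilon \leq \sigma/\lip(h)$. For the Lipschitz estimate, the key observation is that whenever $x \in \inter(U_i)$ and $y \notin \inter(U_i)$ one has $\dist(x, \partial U_i) \leq \|x - y\|$, so $r_i(x) \leq \varepsilon \|x - y\|$. A case analysis (both points outside $\bigcup_i \inter(U_i)$; one point in $\inter(U_i)$ and the other outside; both in the same $\inter(U_i)$; both in different $\inter(U_i), \inter(U_j)$) then gives in every case $|\widehat{h}(x) - \widehat{h}(y)| \leq \lip(h)(1 + 2\varepsilon)\|x - y\|$. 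Choosing $\varepsilon \leq \sigma/(2\lip(h))$ completes the proof. I expect the main technical obstacle to be the construction of the smooth weight function $r_i$ with the quantitative bounds; once $r_i$ is in hand, the verifications reduce to elementary case work with the mollified integral.
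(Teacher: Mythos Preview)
Your proposal is correct and follows essentially the same approach as the paper: a variable-radius mollification with a smooth radius function $r_i$ (the paper's $\gamma_i$) that is positive on $\inter(U_i)$, vanishes on $\partial U_i$, and has small Lipschitz constant. Your treatment is in fact slightly cleaner in places---you justify differentiability by noting that $(x,r)\mapsto (h*\psi_r)(x)$ is $C^\infty$ rather than via the paper's dominated convergence computation, and your case analysis for the global Lipschitz bound is more explicit---but the core construction and all the estimates are the same.
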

\begin{proof}
	We may assume that $\lip(h)>0$. Outside of $\bigcup_{i=1}^{\infty}\inter(U_{i})$, we define the function $\widehat{h}$ according to \eqref{eq:borders2}. Given $i\in\N$ we define $\widehat{h}$ on $\inter(U_{i})$ as follows: Fix a $C^{\infty}$ function $\pi:\mathbb{R}^{d}\to\mathbb{R}$ such that $\pi(z)\geq{0}$ for all $z\in\mathbb{R}^{d}$, $\text{supp}(\pi)\subseteq{B(0,1)}$ and $\int_{\mathbb{R}^{d}}\pi(z)dz=1$. Choose a $C^{\infty}$ function $\gamma=\gamma_{i}:\R^{d}\to\mathbb{R}$ such that $0<\gamma(x)\leq\sigma/\lip(h)$ for all $x\in\inter(U_{i})$, $\gamma(x)=0$ for all $x\in\R^{d}\setminus \inter(U_{i})$ and $\lip(\gamma)\leq\sigma/\lip(h)$. We now define $\widehat{h}$ on $\inter(U_{i})$ by $\widehat{h}(x)=\frac{1}{\gamma(x)^{d}}\int_{\mathbb{R}^{d}}h(x-z)\pi(z/\gamma(x))dz$ for all $x\in\inter (U_{i})$.
	
	We presently verify statement \eqref{eq:lipnorm2} for $\widehat{h}$. Fix $i\in\N$ and $x\in\inter(U_{i})$. Then
	\begin{align*}
	\left\|\widehat{h}(x)-h(x)\right\|&=\left\|\frac{1}{\gamma(x)^{d}}\int_{\mathbb{R}^{d}}h(x-z)\pi(z/\gamma(x))dz-h(x)\frac{1}{\gamma(x)^{d}}\int_{\mathbb{R}^{d}}\pi(z/\gamma(x))dz\right\|\\
	&\leq \frac{1}{\gamma(x)^{d}}\int_{\mathbb{R}^{d}}\left\|h(x-z)-h(x)\right\|\pi(z/\gamma(x))dz
	\leq\gamma(x)\lip(h)\leq\sigma.
	\end{align*}
	This establishes the first part of \eqref{eq:lipnorm2}. The penultimate inequality also proves that $h$ is continuous on $\R^{d}$, since $\gamma(x)=0$ for all $x\in\partial U_{i}$. Thus, for the second part of \eqref{eq:lipnorm2}, it suffices to check the Lipschitz condition for points $x,y\in\inter(U_{i})$. For such $x,y$ we have
	\begin{align*}
	\left\|\widehat{h}(y)-\widehat{h}(x)\right\|&=\left\|\frac{1}{\gamma(y)^{d}}\int_{\mathbb{R}^{d}}h(y-z)\pi(z/\gamma(y))dz-\frac{1}{\gamma(x)^{d}}\int_{\mathbb{R}^{d}}h(x-z)\pi(z/\gamma(x))dz\right\|\\
	&\leq\int_{B(0,1)}\left\|h(y-\gamma(y)v)-h(x-\gamma(x)v)\right\|\pi(v)dv\\
	&\leq \lip(h)(1+\lip(\gamma))\left\|y-x\right\|\leq (\lip(h)+\sigma)\left\|y-x\right\|,
	\end{align*}
	as required. Finally, let us verify \eqref{eq:evwdiff2}. 
	Fix $i\in\N$ and define for each $u\in\mathbb{R}^{d}$ a map $\Phi_{u}:\inter(U_{i})\to \mathbb{R}^{d}$ by 
	\begin{equation*}
	\Phi_{u}(x)=\pi(x-u/\gamma(x)).
	\end{equation*}
	Note that each $\Phi_{u}$ is a differentiable function and for $x\in \inter(U_{i})$ we can write $\widehat{h}(x)=\widetilde{h}(x)/\gamma(x)^{d}$ where
	\begin{equation*}
	\widetilde{h}(x)=\int_{\mathbb{R}^{d}}h(u)\Phi_{u}(x)du
	\end{equation*}
	Since the function $\inter(U_{i})\to\mathbb{R}$, $x\mapsto 1/\gamma(x)^{d}$ is everywhere differentiable, it suffices to prove that $\widetilde{h}$ is every differentiable in $\inter(U_{i})$. We show that
	\begin{equation*}
	\widetilde{h}'(x,e)=\int_{\mathbb{R}^{d}}h(u)\Phi_{u}'(x,e)du\quad\text{ for all }x\in\inter(U_{i}),e\in S^{d-1}.
	\end{equation*}
	
	Fix $x\in\inter(U_{i})$ and $e\in S^{d-1}$. Next, choose $t_{0}$ small enough so that for all $t\in(0,t_{0})$ we have 
	\begin{equation}\label{eq:t0}
	x+te\in\inter(U_{i}),\qquad 1/2\leq \gamma(x+te)/\gamma(x)\leq 2,\qquad t/\gamma(x+te)\leq 1/2.
	\end{equation}
	Observe that
	\begin{equation*}
	\frac{\widetilde{h}(x+te)-\widetilde{h}(x)}{t}=\int_{\mathbb{R}^{d}}h(u)\left[\frac{\Phi_{u}(x+te)-\Phi_{u}(x)}{t}\right]du,
	\end{equation*}
	and for each $u$ we have
	\begin{equation*}
	\lim_{t\to 0}h(u)\left[\frac{\Phi_{u}(x+te)-\Phi_{u}(x)}{t}\right]=h(u)\Phi_{u}'(x,e).
	\end{equation*}
	Therefore, using the Dominated Convergence Theorem, we only need to show that there exists $\varphi\in L^{1}(\mathbb{R}^{d})$ such that 
	\begin{equation*}\label{eq:finite}
	\left\|h(u)\left[\frac{\Phi_{u}(x+te)-\Phi_{u}(x)}{t}\right]\right\|\leq \varphi(u)\quad\text{ for all }\quad t\in(0,t_{0}),u\in\mathbb{R}^{d}.
	\end{equation*}
	Let $t\in(0,t_{0})$. Suppose $\left\|u-x\right\|>4\gamma(x)$. Then, from \eqref{eq:t0}, 
	\begin{equation*}
	\left\|\frac{x+te-u}{\gamma(x+te)}\right\|\geq\frac{4\gamma(x)}{\gamma(x+te)}-\frac{t}{\gamma(x+te)}\geq 2-\frac{1}{2}>1.
	\end{equation*}
	It follows that $\pi(x+te-u/\gamma(x+te))=0$. Similarly, we have $\pi(x-u/\gamma(x))=0$. Hence, $\Phi_{u}(x+te)-\Phi_{u}(x)/t=0$ for all $u\in \R^{d}\setminus B(0,4\gamma(x))$ and $t\in(0,t_{0})$. 
	
	It now suffices to show that $\left\|h(u)\left[\frac{\Phi_{u}(x+te)-\Phi_{u}(x)}{t}\right]\right\|$ is uniformly bounded for $u\in B(x,4\gamma(x))$ and $t\in(0,t_{0})$. Fix $u\in B(x,4\gamma(x))$ and $t\in(0,t_{0})$. Then, using \eqref{eq:t0} we obtain
	\begin{align*}
	&\left\|h(u)\left[\frac{\Phi_{u}(x+te)-\Phi_{u}(x)}{t}\right]\right\|\leq\left\|h\right\|_{\infty}\lip(\pi)\left\|\frac{x+te-u}{t\gamma(x+te)}-\frac{x-u}{t\gamma(x)}\right\|\\
	\leq& \left\|h|_{\overline{B}(x,4\gamma(x))}\right\|_{\infty}\lip(\pi)\frac{2}{\gamma(x)^{2}}\left(\left\|\frac{\gamma(x+te)-\gamma(x)}{t}\right\|\left\|x-u\right\|+\gamma(x)\right)\\
	\leq& \left\|h|_{\overline{B}(x,4\gamma(x))}\right\|_{\infty}\lip(\pi)\frac{2}{\gamma(x)^{2}}(\lip(\gamma)4\gamma(x)+\gamma(x)).
	\end{align*}
\end{proof}
In our main construction, we are faced with a situation where we would like to slightly modify a Lipschitz function $h$ to obtain a new function $f$, whilst preserving non-differentiability points. Using the previous lemma, we ensure that $f$ coincides with $h$ on the boundaries of boxes. The application of the following lemma, is to show that if these boundaries become increasingly concentrated around a point $x$, then the differentiability of $f$ and $h$ at $x$ coincide.
\begin{lemma}\label{lemma:dirdiff=}
	Let $x\in\mathbb{R}^{d}$, $e\in S^{d-1}$, $f,h\colon \R^{d}\to\R$ be Lipschitz functions and $\Delta>0$. Suppose that $\left\{(t_{k,1},t_{k,2})\right\}_{k=1}^{N}$, where $0<t_{k,1}\leq t_{k,2}$ and $N\in\N\cup\left\{\infty\right\}$, is a finite or countable collection of open, possibly degenerate intervals inside $(0,\infty)$ such that the following conditions hold:
	\begin{align}
		&\text{If $N=\infty$ then }\frac{t_{k,2}-t_{k,1}}{t_{k,1}}\to 0\text{ as }k\to\infty.\label{eq:shrink}\\
		&f(x+te)=h(x+te)\qquad \forall t\in\left([0,\Delta)\setminus\bigcup_{k=1}^{N}(t_{k,1},t_{k,2})\right)\cup\left(\bigcup_{k=1}^{N}\left\{t_{k,1},t_{k,2}\right\}\right)\label{eq:agree}.
	\end{align}
	Then the directional derivative $f'(x,e)$ exists if and only if the directional derivative~$h'(x,e)$ exists and $f'(x,e)=h'(x,e)$.
\end{lemma}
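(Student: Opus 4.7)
The plan is to reduce the claim to showing that the pointwise discrepancy between $f$ and $h$ along the ray $\{x + te : t \geq 0\}$ vanishes faster than linearly as $t \to 0^+$. Setting $t = 0$ in \eqref{eq:agree} (note that no $(t_{k,1}, t_{k,2})$ contains $0$, since $t_{k,1} > 0$) yields $f(x) = h(x)$, so the simultaneous existence and equality of $f'(x, e)$ and $h'(x, e)$ will follow once I establish
\begin{equation*}
\frac{f(x+te) - h(x+te)}{t} \to 0 \quad \text{as } t \to 0^+.
\end{equation*}

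First I would split the small positive values of $t$ into two cases. If $t$ lies outside every open interval $(t_{k,1}, t_{k,2})$, then \eqref{eq:agree} gives $f(x+te) = h(x+te)$, so the quotient vanishes. Otherwise $t \in (t_{k,1}, t_{k,2})$ for some $k$; since $f(x + t_{k,1}e) = h(x + t_{k,1}e)$ by \eqref{eq:agree}, the Lipschitz property combined with the triangle inequality yields
\begin{equation*}
|f(x+te) - h(x+te)| \leq (\lip(f) + \lip(h))(t - t_{k,1}),
\end{equation*}
hence $|(f(x+te) - h(x+te))/t| \leq (\lip(f) + \lip(h))(t_{k,2} - t_{k,1})/t_{k,1}$.

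The remaining step is to argue that this upper bound is small whenever $t$ is small. For $N < \infty$, all the intervals are bounded away from $0$, so for $t < \min_{k \leq N} t_{k,1}$ we always fall in the first case and the quotient is zero. For $N = \infty$, consider any sequence $t_n \to 0^+$ with each $t_n$ lying in some interval $(t_{k_n, 1}, t_{k_n, 2})$. A short pigeonhole argument shows that $k_n \to \infty$: if some index $k^*$ were to repeat infinitely often along a subsequence, we would have $t_n > t_{k^*, 1} > 0$ infinitely often, contradicting $t_n \to 0$. Condition \eqref{eq:shrink} then forces $(t_{k_n, 2} - t_{k_n, 1})/t_{k_n, 1} \to 0$, and the desired limit follows.

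I expect the only subtle point to be the combinatorial observation that $k_n \to \infty$ for any sequence of parameters $t_n \to 0^+$ drawn from the intervals, since \eqref{eq:shrink} is phrased in terms of the index $k$ rather than directly in terms of $t_{k,1}$; once this is in hand, the remaining ingredients reduce to the Lipschitz property of $f$ and $h$ together with elementary manipulation of difference quotients.
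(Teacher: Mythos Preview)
Your proposal is correct and uses essentially the same ingredients as the paper: agreement of $f$ and $h$ at the endpoints $t_{k,1}$, the Lipschitz bound on $(t_{k,1},t_{k,2})$, and the observation that small $t$ can only meet intervals with large index $k$. The only difference is cosmetic: the paper assumes $f'(x,e)$ exists and runs a direct $\varepsilon$--$\delta$ argument for $h'(x,e)$, whereas you prove the symmetric statement $\bigl(f(x+te)-h(x+te)\bigr)/t\to 0$ once and for all, which is a slightly tidier packaging of the same estimate.
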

\begin{proof}
	Note that the statement is symmetric with respect to $f$ and $h$. We may assume that $\lip(f)+\lip(h)>0$. 
	
	Suppose that the directional derivative $f'(x,e)$ exists. Fix $\varepsilon>0$ and choose $\delta>0$ so that
	\begin{equation}\label{eq:delta}
	\left|f(x+te)-f(x)-tf'(x,e)\right|\leq\frac{\varepsilon}{3}t\quad\text{whenever}\quad0<t<\delta.
	\end{equation}
	If $N<\infty$ we set $K=N$. Otherwise, using \eqref{eq:shrink}, we may pick $K\geq{1}$ large enough so that
	\begin{equation}\label{eq:K}
	\frac{t_{k,2}-t_{k,1}}{t_{k,1}}\leq\frac{\varepsilon}{3(\lip(f)+\lip(h))}\quad\text{for every}\quad k\geq K.
	\end{equation}
	Let $t\in(0,\min\left\{\delta,\Delta,t_{1,1},\ldots,t_{K,1}\right\}$. We distinguish two cases:\\
	First assume that $t\in(0,\infty)\setminus\left(\bigcup_{k=1}^{N}(t_{k,1},t_{k,2})\right)$. Then, combining \eqref{eq:agree}, \eqref{eq:delta} and $t\in(0,\min\left\{\delta,\Delta\right\})$ we get
	\begin{equation*}
		\left|h(x+te)-h(x)-tf'(x,e)\right|=\left|f(x+te)-f(x)-tf'(x,e)\right|\leq\frac{\varepsilon}{3}t.
	\end{equation*}
	In the remaining case there exists $k$ such that $t\in(t_{k,1},t_{k,2})$. Moreover, since $t<t_{l,1}$ for $1\leq l\leq K$, we must have $k>K$. Note, in particular, that in this case $N=\infty$. Using \eqref{eq:agree}, \eqref{eq:K} and \eqref{eq:delta} we deduce
	\begin{align*}
		&\left|h(x+te)-h(x)-tf'(x,e)\right|\leq\left|h(x+te)-h(x+t_{k,1}e)\right|\\&+\left|f(x+t_{k,1}e)-f(x+te)\right|+\left|f(x+te)-f(x)-tf'(x,e)\right|\\
		&\leq(\lip(h)+\lip(f))(t-t_{k,1})+\frac{\varepsilon}{3}t\leq\frac{(\lip(h)+\lip(f))(t_{k,2}-t_{k,1})}{t_{k,1}}t+\frac{\varepsilon}{3}{t}\leq\varepsilon{t}.
	\end{align*}
	We have now established that the directional derivative $h'(x,e)$ exists and equals $f'(x,e)$.
\end{proof}

\begin{lemma}\label{lemma:closed}
	Let $F\subseteq\mathbb{R}^{d}$ and let $A$ be a closed subset of $F$. Let $h\colon \R^{d}\to\R$ be a Lipschitz~function and let $\left\{U_{i}\right\}_{i=1}^{\infty}$ be a collection of boxes with pairwise disjoint interiors such that
	\begin{equation}\label{eq:tiling}
	F\cap\bigcup_{i=1}^{\infty}U_{i}=F\setminus A\quad\text{and}\quad\frac{\diam(U_{i})}{\dist(U_{i},A)}\to 0\quad\text{as}\quad i\to\infty.
	\end{equation}
	Then the following two statements hold:
	\begin{enumerate}
		\item Suppose that $F\setminus A$ is a non-universal differentiability set and let $\varepsilon>0$. Then there exists a Lipschitz function $f\colon \R^{d}\to\R$ with the following properties:
		\begin{align}
			&\text{$f$ is nowhere differentiable in}\quad (F\setminus A)\setminus\left(\bigcup_{i=1}^{\infty}\partial U_{i}\right),\label{eq:nowherediff}\\
			&\left\|f-h\right\|_{\infty}\leq\varepsilon\quad\text{and}\quad\lip(f)\leq\lip(h)+\varepsilon,\label{eq:close}\\
			&f(y)=h(y)\quad\text{whenever}\quad y\in \R^{d}\setminus\bigcup_{i=1}^{\infty}\inter(U_{i}).\label{eq:coincide}
		\end{align}
		\item Let $x\in A$ and suppose $f\colon \R^{d}\to\R$ is any Lipschitz function satisfying the condition \eqref{eq:coincide}. Then $f$ is differentiable at $x$ if and only if $h$ is differentiable at $x$.
	\end{enumerate}
\end{lemma}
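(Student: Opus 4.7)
For part (1), I would combine Lemma~\ref{lemma:main} and Lemma~\ref{lemma2}. Since $F\cap\inter(U_i)\subseteq F\setminus A$ is a non-universal differentiability set for each $i$, Lemma~\ref{lemma:main} with parameter $\eta=\varepsilon/2$ furnishes a Lipschitz $g\colon\R^d\to\R$ with $\|g\|_\infty,\lip(g)\leq\varepsilon/2$, vanishing outside $\bigcup_i\inter(U_i)$, and nowhere differentiable in $F\cap\bigcup_i\inter(U_i)$. Independently, Lemma~\ref{lemma2} applied to $h$ with $\sigma=\varepsilon/2$ yields a Lipschitz $\widehat h$ agreeing with $h$ outside $\bigcup_i\inter(U_i)$, everywhere differentiable there, with $\|\widehat h-h\|_\infty\leq\varepsilon/2$ and $\lip(\widehat h)\leq\lip(h)+\varepsilon/2$. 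Set $f:=\widehat h+g$; \eqref{eq:close} and \eqref{eq:coincide} are immediate, and non-differentiability of $f$ at any point of $F\cap\inter(U_i)$ holds because $\widehat h$ is differentiable and $g$ is not there. Finally, $(F\setminus A)\setminus\bigcup_i\partial U_i=F\cap\bigcup_i\inter(U_i)$ in view of the useful fact that $\inter(U_i)\cap\partial U_j=\emptyset$ for all $i,j$ (otherwise a small ball around a common point would meet $\inter(U_j)$ inside $\inter(U_i)$, violating disjointness of interiors), establishing~\eqref{eq:nowherediff}.

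For part (2), fix $x\in A$ and a Lipschitz $f$ satisfying~\eqref{eq:coincide}. Note $x\notin\bigcup_i U_i$: since $x\in A\subseteq F$ and $F\cap\bigcup_i U_i=F\setminus A$, the alternative $x\in U_i$ would force $x\in F\setminus A$. Hence $f(x)=h(x)$. The plan is to show, for every $e\in S^{d-1}$, that the one-sided directional derivative $f'(x,e)$ exists precisely when $h'(x,e)$ does, with equal values, by invoking Lemma~\ref{lemma:dirdiff=}. Carrying this out for every direction (including $-e$), Fr\'echet differentiability transfers between $f$ and $h$: for Lipschitz maps on $\R^d$, Fr\'echet differentiability at $x$ is equivalent to the existence of all one-sided directional derivatives at $x$ together with their linear dependence on direction, a property preserved by coincidence of all such derivatives.

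The key move is to apply Lemma~\ref{lemma:dirdiff=} along the ray $t\mapsto x+te$ with the collection of intervals taken to be the ray intersections with individual boxes, \emph{not} the connected components of $\{t>0 : x+te\in\bigcup_i\inter(U_i)\}$. For each $i$, let $[a_i,b_i]:=\{t\geq 0 : x+te\in U_i\}$, a closed bounded interval, possibly empty or degenerate, with $a_i>0$ whenever nontrivial since $x\notin U_i$. Enumerate the nontrivial $(a_i,b_i)$ in order of~$i$. Condition~\eqref{eq:agree} follows from the disjointness observation of part~(1): each endpoint $x+a_ie,\,x+b_ie$ lies in $\partial U_i\subseteq\R^d\setminus\bigcup_j\inter(U_j)$, and for $t\notin\bigcup_i(a_i,b_i)$ the point $x+te$ lies outside every $\inter(U_j)$; in both cases $f=h$ by~\eqref{eq:coincide}. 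The shrinking condition~\eqref{eq:shrink} reduces directly to the hypothesis on the cover:
\[
\frac{b_i-a_i}{a_i}\leq\frac{\diam(U_i)}{\dist(U_i,A)}\xrightarrow{i\to\infty}0,
\]
where the inequality uses $\dist(U_i,A)\leq\|x-(x+a_ie)\|=a_i$, valid since $x\in A$ and $x+a_ie\in U_i$. The main obstacle I expect is recognising the correct choice of intervals: the obvious candidate -- the components of $\{t>0 : x+te\in\bigcup_i\inter(U_i)\}$ -- can fail the shrinking hypothesis even under~\eqref{eq:tiling}, for example when a single large-ratio box covers an entire such component. Taking one interval per box circumvents this and makes the reduction to~\eqref{eq:tiling} automatic.
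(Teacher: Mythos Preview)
Your proposal is correct and follows essentially the same approach as the paper's proof: for part~(1) you combine Lemma~\ref{lemma2} (with $\sigma=\varepsilon/2$) and Lemma~\ref{lemma:main} (with $\eta=\varepsilon/2$) and set $f=\widehat h+g$, exactly as the paper does; for part~(2) you reduce to Lemma~\ref{lemma:dirdiff=} along each ray $x+[0,\infty)e$, taking one interval per box $U_i$ that meets the ray and bounding $(b_i-a_i)/a_i$ by $\diam(U_i)/\dist(U_i,A)$, which is precisely the paper's argument. Your added remarks---the observation $\inter(U_i)\cap\partial U_j=\emptyset$, the explicit justification that Fr\'echet differentiability transfers once all one-sided directional derivatives coincide, and the warning about why connected components of the ray intersection would be the wrong choice of intervals---are helpful elaborations on steps the paper leaves implicit, not departures from its method.
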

\begin{proof}
	Let us first verify statement 1. Suppose $F\setminus A$ is a non-universal differentiability set and let $\varepsilon>0$. Note that the boxes $\left\{U_{i}\right\}_{i=1}^{\infty}$, the function $h$ and $\sigma=\varepsilon/2$ satisfy the conditions of Lemma~\ref{lemma2}. Let $\widehat{h}\colon \R^{d}\to\R$ be the function given by the conclusion of Lemma~\ref{lemma2}.
	
	The conditions of Lemma~\ref{lemma:main} are satisfied for $E=F\setminus A$, the collection $\left\{U_{i}\right\}_{i=1}^{\infty}$ and $\eta=\varepsilon/2$. Let $g\colon \R^{d}\to\R$ be given by the conclusion of Lemma~\ref{lemma:main}. We define $f=\widetilde{h}+g$. The assertions \eqref{eq:nowherediff}, \eqref{eq:close} and \eqref{eq:coincide} now follow easily from the properties of $\widetilde{h}$ and $g$.
	
	Finally, we prove statement 2. Suppose $f\colon \R^{d}\to\R$ satisfies \eqref{eq:coincide} and $x\in A$. Let $e\in S^{d-1}$ be any direction. We show that $f'(x,e)$ exists if and only if $h'(x,e)$ exists and $f'(x,e)=h'(x,e)$. Since $e\in S^{d-1}$ is arbitrary, this suffices.
	
	Let $\left\{U_{i_{k}}\right\}_{k=1}^{N}$, where $N\in\N\cup\left\{\infty\right\}$, be the collection of all boxes $U_{i}$ which intersect the line $x+[0,\infty)e$. Since $x\in A\subseteq F$ and \eqref{eq:tiling} holds, we have that $x\notin\bigcup_{i=1}^{\infty}U_{i}$. For each $k\geq 1$ we write
	\begin{equation}\label{eq:bdypoints}
	U_{i_{k}}\cap\left(x+(0,\infty)e\right)=\left[x+t_{k,1}e,x+t_{k,2}e\right]
	\end{equation}
	where $t_{k,1}\leq t_{k,2}$ are strictly positive real numbers. Choose $\Delta>0$ small enough so that $B(x,\Delta)\subseteq O$. We note that the following condition is satisfied:
	\begin{equation}\label{eq:x+te}
	x+te\in\begin{cases}
	\partial U_{i_{k}} & \text{if }t=t_{k,j},\,j=1,2,\\
	\R^{d}\setminus\left(\bigcup_{i=1}^{\infty}\inter(U_{i})\right) & \text{if }t\in \left([0,\Delta)\setminus\bigcup_{k=1}^{N}(t_{k,1},t_{k,2})\right)\cup\bigcup_{k=1}^{N}\left\{t_{k,1},t_{k,2}\right\}. 
	\end{cases}
	\end{equation}

	Let us verify that the conditions of Lemma~\ref{lemma:dirdiff=} hold for $x$, $e$, $f$, $h$, $\Delta$ and the intervals $\left\{(t_{k,1},t_{k,2})\right\}_{k=1}^{N}$. In view of the conclusion of Lemma~\ref{lemma:dirdiff=}, this completes the proof.
	
	If $N=\infty$, then using \eqref{eq:bdypoints}, we get that
	\begin{equation*}
		0\leq\frac{t_{k,2}-t_{k,1}}{t_{k,1}}\leq\frac{\diam(U_{i_{k}})}{\dist(U_{i_{k}},A)}\to0\quad\text{as}\quad k\to\infty.
	\end{equation*}
	This proves \eqref{eq:shrink}. Finally, we note that \eqref{eq:agree} follows from \eqref{eq:x+te} and \eqref{eq:coincide}.
\end{proof}
We are now ready to combine the results of the present section in proofs of our main results:
\begin{proof}[Proof of Lemma~\ref{lemma:closed2}]
	Suppose the contrary for some $A\subseteq F\subseteq\mathbb{R}^{d}$. Then there exists a Lipschitz function $h\colon \R^{d}\to\R$ such that $h$ is nowhere differentiable in $A$. Applying Lemma~\ref{lemma:respectcover}, we find a collection $\left\{U_{i}\right\}_{i=1}^{\infty}$ of boxes with pairwise disjoint interiors such that \eqref{eq:tiling} holds. The conditions of Lemma~\ref{lemma:closed} are satisfied for $F$, $A$, $h$ and $\left\{U_{i}\right\}_{i=1}^{\infty}$. Further, the hypothesis of Lemma~\ref{lemma:closed} part~1 is satisfied for $F$, $A$ and arbitrary $\varepsilon>0$. Let $f\colon \R^{d}\to\R$ be the Lipschitz function given by the conclusion of Lemma~\ref{lemma:closed}, part 1.
	
	By Lemma~\ref{lemma:closed}, part 2 the differentiability of $f$ and $h$ coincides at all points of $A$. Thus $f$ is nowhere differentiable in $A$. Moreover, $f$ is nowhere differentiable in $(F\setminus A)\setminus\bigcup_{i=1}^{\infty}\partial U_{i}$ by \eqref{eq:nowherediff}. Hence $F\setminus\bigcup_{i=1}^{\infty}\partial U_{i}$ is a non-universal differentiability set and Lemma~\ref{lemma:removingboundaries} asserts that $F$ is also a non-universal differentiability set.
\end{proof}

\begin{proof}[Proof of Theorem~\ref{theorem:kernel}]
	Note that $\K(F)$ is a closed subset of $F$ and $F\setminus \K(F)$ is a non-universal differentiability set by Corollary \ref{cor:nudschar}. Therefore, we may apply Lemma~\ref{lemma:closed2} with $A=\K(F)$ to deduce that $\K(F)$ is a universal differentiability set. This proves (i). For (ii), it only remains to check that $\K(\K(F))=\K(F)$. Let $x\in \K(F)$ and $\varepsilon>0$. Then we observe that
	\begin{equation*}
		B(x,\varepsilon)\cap \K(F)=\K(B(x,\varepsilon)\cap F),
	\end{equation*}
	and the latter set is a universal differentiability set by part (i) and $x\in\K(F)$.
\end{proof}

\begin{proof}[Proof of Theorem~\ref{theorem:fsigma}]
	Suppose that the contrary holds for some universal differentiability set $E\subseteq\mathbb{R}^{d}$. This means that there exist relatively closed subsets $A_{i}$ of $E$ such that $E=\bigcup_{i=1}^{\infty}A_{i}$ and each $A_{i}$ is a non-universal differentiability set. By Lemma~\ref{lemma:closed2}, we may assume that $A_{k}\subseteq{A_{k+1}}$ for each $k\geq{1}$.  We will obtain a contradiction, by proving that $E$ is a non-universal differentiability set.
	
	We begin the construction by using Lemma~\ref{lemma:respectcover} to find a collection of boxes $\left\{U_{i,1}\right\}_{i=1}^{\infty}$ with pairwise disjoint interiors such that 
	\begin{equation*}
	E\cap\bigcup_{i=1}^{\infty}U_{i,1}=E\setminus A_{1}\quad\text{ and }\quad\frac{\diam(U_{i,1})}{\dist(U_{i,1},A_{1})}\to 0\text{ as }i\to\infty.
	\end{equation*}
	Choose a Lipschitz function $f_{1}\colon \R^{d}\to\R$ such that $f_{1}$ is nowhere differentiable in $A_{1}$.
	
	Suppose $n\geq 1$, the Lipschitz function $f_{n}\colon \R^{d}\to\R$ and the collections $\left\{U_{i,l}\right\}_{i=1}^{\infty}$ of boxes with pairwise disjoint interiors are defined for $l=1,\ldots,n$ such that
	\begin{align}
		&\text{$f_{n}$ is nowhere differentiable in the set}\quad A_{n}\setminus\left(\bigcup_{l=1}^{n-1}\bigcup_{i=1}^{\infty}\partial U_{i,l}\right),\label{eq:fnnowhdiff}\\
		&E\cap\bigcup_{i=1}^{\infty}U_{i,n}=E\setminus A_{n}\quad\text{and}\quad\frac{\diam(U_{i,n})}{\dist(U_{i,n},A_{n})}\to 0\quad\text{as}\quad i\to\infty.\label{eq:nthcover}
	\end{align}
	Let the Lipschitz function $f_{n+1}\colon \R^{d}\to\R$ be given by the conclusion of Lemma~\ref{lemma:closed}, part~1 when we take $A=A_{n}$, $F=A_{n+1}$, $h=f_{n}$, $U_{i}=U_{i,n}$ and $\varepsilon=2^{-(n+1)}$. Then $f_{n+1}$ is nowhere differentiable in $(A_{n+1}\setminus A_{n})\setminus\bigcup_{i=1}^{\infty}\partial U_{i,n}$. From part 2 of Lemma~\ref{lemma:closed}, the differentiability of $f_{n+1}$ and $f_{n}$ coincides at all points of $A_{n}$. Hence, using \eqref{eq:fnnowhdiff}, $f_{n+1}$ is nowhere differentiable in the set $A_{n+1}\setminus\left(\bigcup_{l=1}^{n}\bigcup_{i=1}^{\infty}\partial{U_{i,n}}\right)$.
	
	Let the collection of boxes $\left\{U_{i,n+1}\right\}_{i=1}^{\infty}$ be given by the conclusion of Lemma~\ref{lemma:respectcover} when we take $F=E$, $A=A_{n+1}$ and $V_{i}=U_{i,n}$. This ensures the validity of \eqref{eq:nthcover} with $n$ replaced by $n+1$.
	
	We have defined, for each integer $n\geq 1$, a Lipschitz~function $f_{n}\colon \R^{d}\to\R$ and a collection of boxes $\left\{U_{i,n}\right\}_{i=1}^{\infty}$ with pairwise disjoint interiors. In addition to \eqref{eq:fnnowhdiff} and \eqref{eq:nthcover}. the construction ensures that the following conditions hold for each $n\geq 2$:

\begin{align}
\label{2k} \left\|f_{n}-f_{n-1}\right\|_{\infty}&\leq 2^{-n}\text{ and }\lip(f_{n})\leq\lip(f_{n-1})+2^{-n},\\
\label{4k} f_{n}(y)&=f_{n-1}(y)\text{ whenever }y\in\mathbb{R}^{d}\setminus\left(\bigcup_{i=1}^{\infty}\inter(U_{i,n-1})\right),\\
\label{5k}\text{For each index }&i\text{, there exists an index $j$ such that $U_{i,n}\subseteq U_{j,n-1}$.}
\end{align}
	For the sake of future reference we point out that
	\begin{equation}\label{eq:fixed}
	f_{m}(y)=f_{n}(y)\quad\text{whenever $m\geq n$ and}\quad y\in\mathbb{R}^{d}\setminus\left(\bigcup_{i=1}^{\infty}\inter(U_{i,n})\right).
	\end{equation}
	This follows from \eqref{4k} and \eqref{5k}. By \eqref{2k} the sequence $\left\{f_{n}\right\}_{n=1}^{\infty}$ converges uniformly to a Lipschitz function $f\colon\R^{d}\to\R$. Using \eqref{eq:fixed}, we deduce that the function $f$ satisfies
	\begin{equation}\label{eq:f=fn}
	f(y)=f_{n}(y)\quad\text{whenever}\quad y\in\R^{d}\setminus\left(\bigcup_{i=1}^{\infty}\inter (U_{i,n})\right).
	\end{equation}
	We are now ready to prove that $E$ is a non-universal differentiability set. In view of Lemma~\ref{lemma:removingboundaries}, it is sufficient to show that $E'=E\setminus\left(\bigcup_{k=1}^{\infty}\bigcup_{i=1}^{\infty}\partial U_{i,k}\right)$ is a non-universal differentiability set. We will prove that $f$ is nowhere differentiable in $E'$.
	
	Fix $x\in E'$ and choose $n$ such that $x\in A_{n}$. The condition \eqref{eq:nthcover} ensures that the conditions of Lemma~\ref{lemma:closed} are satisfied for $F=E$, $A=A_{n}$, $h=f_{n}$ and $U_{i}=U_{i,n}$. Further, from \eqref{eq:f=fn}, the hypothesis of Lemma~\ref{lemma:closed}, part~2 is satisfied for the function $f\colon \R^{d}\to\R$. Therefore, the differentiability of $f$ at $x$ coincides with that of $f_{n}$ at $x$ and, by \eqref{eq:fnnowhdiff}, the proof is complete.
\end{proof}

\section{Differentiability inside sets of positive measure.}
In this section we give an application of Theorem~\ref{theorem:fsigma} to differentiability inside sets of positive Lebesgue measure. 
\begin{theorem}\label{theorem:ptimesq}
	Let $d\geq 2$ and suppose $P_{1},P_{2},\ldots,P_{d}\subseteq\mathbb{R}$ are sets of positive one-dimensional Lebesgue measure. Then $P_{1}\times\ldots\times P_{d}$ contains a compact universal differentiability set with Lebesgue measure zero.
\end{theorem}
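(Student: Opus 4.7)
The plan is first to reduce, via inner regularity of Lebesgue measure, to the case where each $P_j$ is compact of positive Lebesgue measure. Writing $K := P_1 \times \cdots \times P_d$, Fubini gives $|K| = \prod_j |P_j| > 0$, so $K$ is a compact subset of $\mathbb{R}^d$ of positive Lebesgue measure and thus a UDS by Rademacher's theorem. The problem then reduces to extracting a compact measure-zero UDS from inside the compact positive-measure UDS $K$.

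The main construction is an iterative shrinking scheme based on Theorem~\ref{theorem:fsigma}. Set $K_0 := K$. Given a compact UDS $K_n$ with $|K_n| > 0$, take a grid of closed cubes $Q_1,\ldots,Q_{m_n}$ of side length $(|K_n|/2)^{1/d}$ restricted to a bounded region containing $K_n$; each cube has Lebesgue measure $|K_n|/2$, so $|K_n \cap Q_i| \leq |K_n|/2$ for every $i$. The decomposition $K_n = \bigcup_i (K_n \cap Q_i)$ is a finite cover by relatively closed subsets of the UDS $K_n$, so Theorem~\ref{theorem:fsigma} produces an index $i_n$ for which $K_{n+1} := K_n \cap Q_{i_n}$ is itself a UDS. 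This yields a nested sequence of compact UDSs with $|K_n| \leq 2^{-n}|K|$, and $M := \bigcap_n K_n$ is a compact, Lebesgue-null subset of $K \subseteq P_1 \times \cdots \times P_d$.

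The principal obstacle is verifying that $M$ itself is a universal differentiability set: a decreasing intersection of compact UDSs can in general collapse (even to a single point) and lose the UDS property, since differentiability does not automatically pass to limits of differentiability points. To overcome this I would reinforce the inductive step by replacing each $K_n$ by its kernel $\ker(K_n)$, which remains a UDS by Theorem~\ref{theorem:kernel}(i), and which has the crucial property from Theorem~\ref{theorem:kernel}(ii) that the differentiability points of every Lipschitz function form a dense subset of $K_n$. The product hypothesis on $K$ then enters essentially: beyond mere positive $d$-dimensional measure, $K$ carries, through every point, coordinate-parallel slabs of positive $(d-1)$-dimensional measure, and this extra rigidity should permit uniform control on differentiability quotients under the nested intersection. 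A diagonal argument producing, for each Lipschitz $f \colon \mathbb{R}^d \to \mathbb{R}$, a differentiability point of $f$ inside $M$ is the technical heart of the proof; carrying this uniformization through in a way that exploits the product structure, rather than just the measure-theoretic density of $K$ at typical points, is where I expect the real work to lie.
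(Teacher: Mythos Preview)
Your shrinking scheme collapses to a single point, and a single point is never a universal differentiability set. Concretely: the cube $Q_{i_n}$ you pick at step $n$ has side length $(|K_n|/2)^{1/d}\leq (2^{-(n+1)}|K_0|)^{1/d}$, so $\diam(K_{n+1})\leq\sqrt{d}\,(2^{-(n+1)}|K_0|)^{1/d}\to 0$. Hence $M=\bigcap_n K_n$ is a singleton $\{x\}$, and $y\mapsto\|y-x\|$ is Lipschitz and not differentiable at $x$. Replacing $K_n$ by its kernel does nothing to help, since $\ker(K_n)\subseteq K_n$ and the diameters still go to zero; nor does the product structure enter anywhere in your construction (the grid argument uses only $|K|>0$). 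The ``diagonal argument exploiting product rigidity'' you hope for at the end is not merely technical: there is simply no differentiability point to find inside a singleton, so no amount of uniformisation will produce one.

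The paper's proof runs in the opposite direction. Rather than starting from a positive-measure set and trying to shrink its measure to zero while preserving the UDS property, it starts from a compact Lebesgue-null UDS $C_0\subseteq\mathbb{R}^{d}$ already supplied by Dor\'e--Maleva, and then pushes $C_0$ into $P_1\times\cdots\times P_d$ one coordinate at a time. At stage $k$ one covers $\mathbb{R}$ up to a null set by countably many translates $P_k+r_n$; the slabs $F_{k,n}=\mathbb{R}^{k-1}\times(P_k+r_n)\times\mathbb{R}^{d-k}$ are closed and their union meets $C_{k-1}$ in a UDS (since the complement projects to a null set in the $k$th coordinate and \cite[Lemma~2.1]{dymondmaleva13} applies). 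Theorem~\ref{theorem:fsigma} then selects one $n$ with $C_{k-1}\cap F_{k,n}$ a UDS, and translating by $-r_ne_k$ lands it inside $P_1\times\cdots\times P_k\times\mathbb{R}^{d-k}$. The key point is that the null-measure property is present from the outset in $C_0$ and is preserved under intersection and translation; you never need an intersection of a decreasing sequence of UDSs to remain a UDS.
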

\begin{proof} We may assume that each set $P_{i}$ is closed. For $k=0,1,\ldots,d$, let $\Pi_{k}$ be the statement that $P_{1}\times P_{2}\times\ldots\times P_{k}\times\mathbb{R}^{d-k}$ contains a compact universal differentiability set $C_{k}$ with Lebesgue measure zero. The statement $\Pi_{0}$ is proved in \cite{doremaleva1}. Suppose now that $0<k\leq d$ and that the statement $\Pi_{k-1}$ holds. Let us prove the statement $\Pi_{k}$ and thus, by induction, Theorem~\ref{theorem:ptimesq}. 

	Let $\left\{r_{n}\right\}_{n=1}^{\infty}$ be a countable dense subset of $\mathbb{R}$ and consider the set
	\begin{equation*}
		F_{k}=\bigcup_{n=1}^{\infty}(\mathbb{R}^{k-1}\times (P_{k}+r_{n})\times\mathbb{R}^{d-k}).
	\end{equation*}
	Writing $F_{k,n}=\mathbb{R}^{k-1}\times (P_{k}+r_{n})\times\mathbb{R}^{d-k}$ for each $n$, we have $F_{k}=\bigcup_{n=1}^{\infty}F_{k,n}$ and each set $F_{k,n}$ is closed.
	Further, observe that $p_{k}(\mathbb{R}^{d}\setminus F_{k})$ is a subset of $\mathbb{R}$ with one-dimensional Lebesgue measure zero. We can write
	\begin{equation*}
		C_{k-1}=(C_{k-1}\cap F_{k})\cup (C_{k-1}\cap(\mathbb{R}^{d}\setminus F_{k})).
	\end{equation*}
	Since $\mathbb{R}^{d}\setminus F_{k}$ projects to a set of one-dimensional Lebesgue measure zero, we may apply \cite[Lemma~2.1]{dymondmaleva13} to conclude that $C_{k-1}\cap F_{k}$ is a universal differentiability set. Next, using Theorem~\ref{theorem:fsigma}, we deduce that there exists $n$ such that $C_{k-1}\cap F_{k,n}$ is a universal differentiability set. Setting 
	\begin{equation*}
		C_{k}=(C_{k-1}+-r_{n}e_{k})\cap (\mathbb{R}^{k-1}\times P_{k}\times \mathbb{R}^{d-k}),
	\end{equation*}
	we observe that
	\begin{equation*}
		C_{k}=(C_{k-1}\cap F_{k,n})-r_{n}e_{k}.
	\end{equation*}
	$C_{k}$ is a universial differentiability set, due to the easily verified fact that any translate of a universal differentiability set is a universal differentiability set. Note that $(C_{k-1}+\lambda_{n})\subseteq P_{1}\times\ldots\times P_{k-1}\times \mathbb{R}^{d-k+1}$. Hence, $C_{k}\subseteq P_{1}\times \ldots\times P_{k}\times \mathbb{R}^{d-k}$ and the proof of statement $\Pi_{k}$ is complete.
\end{proof}
The above Theorem~\ref{theorem:ptimesq} provides a partial answer to the following question of Godefroy: Does every subset of $\mathbb{R}^{d}$ with positive Lebesgue measure contain a universal differentiability set of Lebesgue measure zero? This question was asked following a talk of Maleva at the 2012 conference `Geometry of Banach spaces' in CIRM, Luminy, and remains open. Theorem~\ref{theorem:ptimesq} also builds on an observation of Dor\'e and Maleva: A consequence of Lemma~3.5 in~\cite{doremaleva3} is that every set of the form $P\times\mathbb{R}^{d-1}\subseteq\mathbb{R}^{d}$, where $P\subseteq\mathbb{R}$ is a set of positive Lebesgue measure, contains a Lebesgue null universal differentiability set.

\textbf{Acknowledgement.} The author wishes to thank Olga Maleva for helpful discussions.

\bibliographystyle{plain}
\bibliography{biblio}   

\begin{thebibliography}{10}

\bibitem{acp05}
G.~Alberti, M.~Cs{\"o}rnyei, and D.~Preiss.
\newblock Structure of null sets in the plane and applications.
\newblock In {\em European Congress of Mathematics}, pages 3--22, 2005.

\bibitem{csornyeipreisstiser}
M.~Cs{\"o}rnyei, D.~Preiss, and J.~Ti{\v{s}}er.
\newblock Lipschitz functions with unexpectedly large sets of
  nondifferentiability points.
\newblock In {\em Abstract and Applied Analysis}, volume 2005, pages 361--373.
  Hindawi Publishing Corporation, 2005.

\bibitem{doremaleva1}
M.~Dor{\'e} and O.~Maleva.
\newblock {A compact null set containing a differentiability point of every
  Lipschitz function}.
\newblock {\em Mathematische Annalen}, 351(3):633--663, 2009.

\bibitem{doremaleva3}
M.~Dor{\'e} and O.~Maleva.
\newblock {A universal differentiability set in Banach spaces with separable
  dual}.
\newblock {\em Journal of Functional Analysis}, 261(6):1674--1710, 2011.

\bibitem{doremaleva2}
M.~Dor{\'e} and O.~Maleva.
\newblock {A compact universal differentiability set with Hausdorff dimension
  one}.
\newblock {\em Israel Journal of Mathematics}, 191(2):889--900, 2012.

\bibitem{dymondmaleva13}
M.~Dymond and O.~Maleva.
\newblock {Differentiability inside sets with Minkowksi dimension one}.

\bibitem{lindenstrausstiserpreiss12}
J.~Lindenstrauss, D.~Preiss, and J.~Ti\v{s}er.
\newblock {\em {Frechet Differentiability of Lipschitz Functions and Porous
  Sets in Banach Spaces}}.
\newblock Princeton University Press, 2012.

\bibitem{preiss90}
D.~Preiss.
\newblock {Differentiability of Lipschitz functions on Banach spaces}.
\newblock {\em Journal of Functional Analysis}, 91(2):312--345, 1990.

\bibitem{preissspeight2014}
D.~Preiss and G.~Speight.
\newblock {Differentiability of Lipschitz functions in Lebesgue null sets}.
\newblock {\em Inventiones mathematicae}, 199(2):517--559, 2014.

\bibitem{preisszajicek01}
D.~Preiss and L.~Zaj{\'\i}{\v{c}}ek.
\newblock {Directional derivatives of Lipschitz functions}.
\newblock {\em Israel Journal of Mathematics}, 125(1):1--27, 2001.

\bibitem{zahorski46}
L.~Zahorski.
\newblock Sur l’ensemble des points de non-derivabilite d’une fonction
  continue.
\newblock {\em Bull. Soc. Math. France}.

\bibitem{zajicek75}
Z.~Zaj\'{i}\v{c}ek.
\newblock {Sets of $\sigma$-porosity and sets of $\sigma$-porosity $(q)$}.
\newblock {\em \v{C}asopis pro p\v{e}stov\'{a}ni matematiky}, 1976.

\bibitem{zelenypelant04}
M.~Zelen{\`y} and J.~Pelant.
\newblock {The structure of the $\sigma$-ideal of $\sigma$-porous sets}.
\newblock {\em Comment. Math. Univ. Carolin}, 45(1):37--72, 2004.

\end{thebibliography}
Michael Dymond\\
Institut für Mathematik\\
Universit\"at Innsbruck\\
Technikerstraße 13,
6020 Innsbruck,
\"Osterreich (Austria)\\
\texttt{michael.dymond@uibk.ac.at}\\[3mm]

\end{document}